\documentclass[10pt,a4paper]{article}

\usepackage[english]{babel}
\usepackage[utf8]{inputenc}

\usepackage{amsmath,amssymb,amsfonts,amsthm}
\usepackage{mathtools}
\usepackage{mathrsfs}
\usepackage{bbold}
\usepackage{latexsym}
\usepackage{cancel}

\usepackage{graphicx}
\usepackage{subcaption}
\usepackage{caption}
\usepackage{multicol}
\usepackage{fullpage}
\usepackage{floatrow}
\usepackage{multirow}

\usepackage{tikz}
\usepackage{tikz-cd}
\usetikzlibrary{arrows,decorations.pathmorphing,cd}

\usepackage{xcolor}
\usepackage{hyperref}
\hypersetup{
  colorlinks=true,
  linkcolor=blue,
  citecolor=blue,
  urlcolor=blue
}

\newtheorem{teorema}{Theorem}[section]
\newtheorem{definizione}[teorema]{Definition}

\newtheorem{osservazione}[teorema]{Remark}

\title{Invariance properties of Brownian motion via Lie's symmetries}
\date{}

\author{
Susanna Deh\`o\footnote{Dipartimento di Matematica, Universit\`a degli Studi di Milano, \texttt{susanna.deho@unimi.it}}
\and
Francesco C.~De Vecchi\footnote{Dipartimento di Matematica ``Felice Casorati'', Universit\`a degli Studi di Pavia, \texttt{francescocarlo.devecchi@unipv.it}}
\and
Paola Morando\footnote{DISAA, Universit\`a degli Studi di Milano, \texttt{paola.morando@unimi.it}}
\and
Stefania Ugolini\footnote{Dipartimento di Matematica, Universit\`a degli Studi di Milano, \texttt{stefania.ugolini@unimi.it}}
}

\begin{document}
\maketitle

\begin{abstract}
The invariance properties of Brownian motion are investigated and revisited within a recent Lie symmetry approach to stochastic differential equations. Some notable properties of the process can be recovered by a related integration by parts formula developed in the same research area.
\end{abstract}

\medskip
\noindent\textbf{Keywords:} Lie symmetry analysis of SDEs; Invariance properties of Brownian motion; Integration by parts formula and Stein identities

\medskip
\noindent\textbf{AMS Subject Classification:} 60H10; 58D19; 60H07

\section*{Introduction}
The study of symmetries in differential equations, pioneered by Sophus Lie, constitutes a fundamental geometric approach to understand the invariance properties governing a dynamical system (\cite{Olver},\cite{olver2}). By identifying the groups of transformations that preserve the equation's structure, this framework provides systematic tools for order reduction, the construction of exact solutions, and the simplification of complex problems (\cite{gaetanonlinear}). Although symmetry analysis stands as a classical pillar for deterministic ordinary and partial differential equations (ODEs and PDEs), supported by extensive literature, its extension to Stochastic Differential Equations (SDE) represents a relatively recent field of research (\cite{deLara},\cite{ortega1998symmetry},\cite{ortega2013momentum},\cite{gaetalie},\cite{paper2015},\cite{paperriduzione1},\cite{paper}, \cite{romano},\cite{albeverio}, \cite{paper2020},\cite{paperriduzione},\cite{grandemostro},\cite{noether}). The reduction techniques that have proven useful in the deterministic setting can also be formulated in the stochastic case (\cite{lazaro2009reduction},\cite{paperriduzione},\cite{paperriduzione1}). However, properly extending the classical Lie symmetry approach to SDEs requires careful attention: the generality and richness of stochastic processes demand precise definitions and a clear distinction between different types of symmetries and transformations.

In this paper, we focus on Brownian SDEs and their invariance properties, emphasizing, in particular, Brownian motion itself, which, as a well-known and extensively studied case, provides a clear and illustrative example to highlight the full potential of our approach and its unifying perspective. We show that these invariance properties can be investigated both through the recent approach to Lie symmetries of SDEs (developed in Milan, \cite{paper2015}, \cite{paperriduzione1}, \cite{paper}, \cite{romano}, \cite{albeverio}, \cite{paper2020},\cite{paperriduzione}, \cite{grandemostro},\cite{noether}), and via a novel related integration by parts formula (\cite{paper2023},\cite{papernuovo}). \\
\\
The cornerstone of the theory of symmetries for SDEs is provided by stochastic transformations. While classical transformations of differential equations consist primarily of time changes $t \mapsto f(t)$ and space-time diffeomorphisms $(x,t) \mapsto \Phi(x,t)$, the framework of Brownian-driven SDEs admits at least two additional operations intimately linked to the noise term: random rotations of the driving Brownian motion and changes of measure via Girsanov's theorem (\cite{paper2015},\cite{albeverio}).

Let us analyze how stochastic transformations operate on a solution $(X,W)$ of a given SDE in a filtered probability space:
\begin{equation}\label{SDE intro}
    dX_t=\mu(X_t,t)dt+\sigma(X_t,t)dW_t.
\end{equation}
First, we observe that a stochastic transformation \( T\) acts simultaneously on the solution process and on the SDE itself, ensuring structural consistency: the transformed process remains a solution to the transformed equation, possibly within a transformed filtered probability space. Consequently, we denote by $P_T$ the action of the transformation $T$ on the solution process $(X,W)$, and by $E_T$ its action on the equation's coefficients $(\mu, \sigma)$. Mirroring the standard distinction between strong and weak solutions to SDEs, we distinguish between \textit{strong stochastic transformations}, which modify neither the driving Brownian motion nor the filtered probability space (fixed a priori), and \textit{weak stochastic transformations}. The latter, as it is well-known, may change the driving Brownian motion, the filtration, or the underlying probability measure. For weak transformations, admissibility is a crucial requirement: the transformed driving process must remain a Brownian motion with respect to the (possibly transformed) filtered probability space.

Let us start by considering a diffeomorphism $T=\Phi \in C^{2,1}$. By applying Itô's lemma to $\Phi(X_t,t)$, one immediately obtains:
\[
d\Phi(X_t,t) = L(\Phi)(\Phi^{-1}(\Phi(X_t,t)),t) dt + ((D\Phi)\sigma)(\Phi^{-1}(\Phi(X_t,t)),t) dW_t.
\]
Consequently, the transformed process $P_T(X,W) := (\Phi(X_t,t), W_t)$ solves the equation with transformed coefficients $E_T(\mu) := L(\Phi)\circ (\Phi^{-1},id)$ and $E_T(\sigma) := (D\Phi)\sigma\circ (\Phi^{-1},id)$. Since the Brownian motion and the probability space remain unchanged, diffeomorphisms are classified as strong stochastic transformations.

Differently, the following operations are weak transformations.
Considering a time change $t'=f(t) = \int_0^t f'(s)ds$, if $W_t$ is an adapted Brownian motion, it is a standard result (see, e.g. \cite{oksendal}) that $\tilde{W}_t := \int_0^{f^{-1}(t)} \sqrt{f'(s)} dW_s$ is a Brownian motion with respect to the time-changed filtration $\mathcal{F}_{f^{-1}(t)}$. Using Itô's lemma and the time-change formula for SDEs, the process $X_{f^{-1}(t)}$ satisfies:
\[
d X_{f^{-1}(t)} = \frac{\mu}{f'}\circ (\mathrm{Id},f^{-1}) dt + \frac{\sigma}{\sqrt{f'}}\circ (\mathrm{Id},f^{-1}) d\tilde{W}_t.
\]
Thus, the action on the process is $P_T(X,W) := (X_{f^{-1}(t)}, \tilde{W}_t)$, and the action on the SDE is $E_T(\mu, \sigma) := (\frac{\mu}{f'}\circ (\mathrm{Id},f^{-1}), \frac{\sigma}{\sqrt{f'}}\circ (\mathrm{Id},f^{-1}))$.

Similarly, let $T=B(x,t)$ be an orthogonal matrix-valued function. One can prove that the process $\bar{W}_t := \int_0^t B(X_s,s) dW_s$ remains a Brownian motion by Lévy's characterization. Substituting $dW_t = B^{-1} d\bar{W}_t$ into \eqref{SDE intro} yields:
\[
dX_t = \mu dt + \sigma B^{-1} d\bar{W}_t.
\]
In our framework, this means $P_T(X,W) := (X_t, \bar{W}_t)$ and the coefficients transform as $E_T(\mu, \sigma) := (\mu, \sigma B^{-1})$. Although the probability space is fixed, the modification of the driving noise classifies this as a weak transformation.

Finally, let us introduce a Girsanov transformation defined by $T=h(x,t)$. Under suitable assumptions (see \cite{albeverio}), the process $W'_t = W_t - \int_0^t h(X_s,s)ds$ is a Brownian motion under the new measure $\mathbb{Q}$. Substituting $dW_t = dW'_t - h dt$ into the original equation leads to:
\[
dX_t = (\mu + \sigma h) dt + \sigma dW'_t.
\]
Here, the action on the process is $P_T(X,W) := (X_t, W'_t)$, while the coefficients transform as $E_T(\mu, \sigma) := (\mu + \sigma h, \sigma)$. This is a weak transformation as it modifies both the underlying measure and the driving process.

We can unify these examples into a general definition: a stochastic transformation $T$ is a quadruplet $T=(\Phi,f,B,h)$ encompassing these four operations on the SDE. The global action is obtained by composing these individual effects (see Theorem \ref{trasformate}).\\
\\
Within the set of stochastic transformations, a subclass of particular interest consists of those transformations that preserve the structure of the original SDE. We name \emph{symmetries} these transformations, and they are deeply linked to invariance properties of the underlying diffusion, such as self-similarity (see Example \ref{Example time invariance BM}) or conditioning (see Example \ref{Example BB}).
In the stocastic framework, different notions of invariance yield different definitions of symmetry: we define \emph{strong symmetries} as stochastic transformations that preserve the set of strong solutions; \emph{weak symmetries} as transformations that preserve the set of weak solutions; and \emph{\(\mathcal{G}\)-weak symmetries} as transformations that preserve the law of the solution process (see, \cite{grandemostro},\cite{papernuovo}). By imposing invariance conditions, we can characterize the symmetries of a given SDE through a set of \emph{finite determining equations}, which provide necessary and sufficient conditions for a transformation \(T\) to qualify as a symmetry (see Theorem \ref{equazioni determinanti finite}). 

Using these (finite) determining equations, it is straightforward to verify whether a specific candidate \(T\) is a symmetry for a given SDE. For instance, in the case of Brownian motion, one can easily check that the reflection transformation satisfies the determining equations (see Example \ref{Example reflection}), consistent with the well-known fact that if \(W_t\) is a Brownian motion, then \(-W_t\) is again a Brownian motion.\\
\\
However, the inverse problem is significantly more challenging. Since these equations are highly nonlinear in the components $\Phi, f, B, h$, deriving an unknown symmetry $T$ for a fixed SDE is a difficult task. For this reason, aiming to systematically determine symmetries for general SDEs, we follow the classical deterministic approach (see \cite{Olver}) and look for symmetries in a linearized setting. Indeed, the (possible) transition to a Lie algebra structure would provide significantly more powerful computational tools. But in order to move to a Lie algebra structure, the set of stochastic transformations should constitute a Lie group, that is, a group with a differentiable structure (see \cite{Olver} for a complete discussion). Regarding the group structure, we just need to define the composition between transformations and the  inversion operation. Of course, this could be done in different ways, but we want the group structure to have a meaningful probabilistic counterpart. Thus, given two stochastic transformations $T_1$ and $T_2$, we define $T_1 \circ T_2$ as the unique stochastic transformation under which the transformed solution process ($P_{T_1 \circ T_2}(X)$) and the transformed coefficients ($E_{T_1 \circ T_2}(\mu, \sigma)$) coincide with the iteration of individual transformations (respectively, $P_{T_1}(P_{T_2}(X))$ and $E_{T_1}(E_{T_2}(\mu, \sigma))$).

Once established the algebraic structure, we have now to endow the group of stochastic transformations with a differentiable structure to characterize it as a Lie group. A good way to achieve this goal is to establish a one-to-one correspondence between the group of stochastic transformations and the group of diffeomorphisms of suitable principal bundles. By endowing the latter with an appropriate algebraic structure, this correspondence becomes a group isomorphism. Since the group of diffeomorphisms is a well-known Lie group, the group of stochastic transformations inherits the Lie group structure. In particular, we can consider one-parameter groups of stochastic transformations. The elements of the corresponding Lie algebra are obtained in the standard manner by differentiating the elements of the one-parameter group with respect to the flow parameter and evaluating the result at zero. Intuitively, one can view the Lie group elements as integral curves and the corresponding Lie algebra elements as the tangent vector fields to these curves; formally, the Lie algebra is isomorphic to the tangent space of the Lie group at the identity (see \cite{Olver}). Consequently, we define an \emph{infinitesimal stochastic transformation} as an element of this Lie algebra. Conversely, given an infinitesimal stochastic transformation, it is possible to recover the corresponding finite transformation by reconstructing the flow (see Theorem \ref{teo ricostruzione del flusso}).We can now define \emph{infinitesimal symmetries} as those infinitesimal stochastic transformations that correspond to finite symmetries within the Lie group structure. These symmetries can be now characterized as the solution to \emph{infinitesimal determining equations}, which thus provide necessary and sufficient conditions for an infinitesimal transformation to be an infinitesimal symmetry. This approach overcomes the inherent difficulty of solving finite determining equations directly. Instead, one solves the infinitesimal determining equations—which are significantly easier to handle due to their linearized setting—to find infinitesimal symmetries. Subsequently, the corresponding finite symmetries are recovered by reconstructing the flow.\\
\\
The study of symmetries is of particular interest in determining the invariance properties of diffusion processes, but this is not the only application of this research field. Indeed, in \cite{paper2023}, the authors demonstrated that this geometric approach to studying symmetries leads to the formulation of an integration by parts formula involving the infinitesimal symmetries of a given SDE, inspired by Bismut's approach to Malliavin calculus (\cite{bismut}).

Since originally the integration by parts theorem was derived without including rotations of the Brownian motion in the set of admissible stochastic transformations, in \cite{papernuovo}  the result is generalized to include rotations as well. The theorem states that if $(Y,m,C,H)$ is an infinitesimal symmetry for a given SDE \eqref{SDE intro}---where $Y=\sum_i Y^i \partial_{x_i}$ is a vector field in the Lie algebra corresponding to the diffeomorphism $\Phi$, $m$ is a time function in the Lie algebra corresponding to the time change $f$, $C$ is an antisymmetric matrix corresponding to the rotation matrix $B$, and $H$ is a vector corresponding to the Girsanov drift $h$---then, under suitable regularity assumptions (see Section \ref{Lie symmetries and integration by parts formulas}), the following holds for all sufficiently smooth functions $F$:
\begin{equation}\label{integrazione per parti 1}
    -m(t) \mathbb{E}_{\mathbb{P}}[L(F(X_t))] + \mathbb{E}_{\mathbb{P}}\left[F(X_t) \int_0^t H(X_s,s) \, d W_s\right] + \mathbb{E}_{\mathbb{P}}[Y(F(X_t))] - \mathbb{E}_{\mathbb{P}}[Y(F(X_0))]=0.
\end{equation}
Formula \eqref{integrazione per parti 1} exhibits a clear integration-by-parts structure. In general, it involves zero-order terms ($F(X_t)$), first-order terms ($Y(F(X_t))$, which entails the first derivatives of $F$ as $Y$ is a vector field), and second-order terms ($L(F(X_t))$, where $L$ is the infinitesimal generator of $X$, thus requiring, in principle, both first and second derivatives). Naturally, certain symmetries may have null components, causing specific terms in \eqref{integrazione per parti 1} to vanish; consequently, we may obtain formulas of order zero, one, or two.\\
\\
One of the strongest features of the new Lie symmetry approach to SDEs is the inclusion of Girsanov transformations within the set of admissible transformations. This choice leads to an infinite-dimensional family of symmetries, depending on arbitrary functions of time (see, for instance, the Brownian motion example in Section \ref{Symmetries of BM}). Consequently, when applied to specific examples, formula \eqref{integrazione per parti 1} incorporates these arbitrary time-dependent functions.

A particularly compelling aspect of the integration by parts formula \eqref{integrazione per parti 1} is that by specializing it through suitable choices of these functions, we can recover well-known formulas from probability theory (see Section \ref{Integration by parts and Stein identities}). Among these, Stein's identities are perhaps the most significant. These identities are fundamental tools that characterize probability distributions and serve as the cornerstone of Stein's method for distributional approximation in statistics. However, in the existing literature, such identities are typically derived for specific distributions on a case-by-case basis, and a fully unified approach is still largely missing.

For instance, the classical Stein identity for a Gaussian random variable $Z \sim \mathcal{N}(\mu, \sigma^2)$ is given by $\mathbb{E}[(Z-\mu)F(Z)] = \sigma^2 \mathbb{E}[F'(Z)]$. In the specific case of a Brownian motion at time $t$, where $W_t \sim \mathcal{N}(0,t)$, this identity implies:
\begin{equation}\label{SteinBM target}
    \mathbb{E}[W_t F(W_t)] = t \mathbb{E}[F'(W_t)].
\end{equation}
From our perspective, the emergence of such relations is not surprising. Indeed, although classically derived using different methods, it is important to observe that identity \eqref{SteinBM target} naturally descends from the symmetries of Brownian motion, and in particular from its invariance under Girsanov transformations. This perspective is closely related to Bismut’s formulation of Malliavin calculus. Specifically, one can consider a perturbation of the Brownian motion by a constant drift $h_\lambda = \lambda$. Since $W_t - \lambda t$ is a Brownian motion under the measure $\mathbb{Q}_\lambda$ defined by the Radon-Nikodym derivative $e^{\lambda W_t - \frac{1}{2}\lambda^2 t}$, we have the equality of expectations:
\[
\mathbb{E}_{\mathbb{P}}[F(W_t)] = \mathbb{E}_{\mathbb{Q}_\lambda}[F(W_t - \lambda t)] = \mathbb{E}_{\mathbb{P}}\left[ e^{\lambda W_t - \frac{1}{2}\lambda^2 t} F(W_t - \lambda t) \right].
\]
By differentiating both sides with respect to $\lambda$ and evaluating at $\lambda = 0$, one recovers precisely the target identity \eqref{SteinBM target}.

In our framework, formula \eqref{integrazione per parti 1}, being much more general, captures this relationship directly. We obtain the identity as a particular case by specializing the arbitrary time-dependent function appearing in the formula to the specific choice corresponding to the Girsanov symmetry used in the argument above.
Indeed, consider a one-dimensional Brownian motion $dX_t=dW_t$. In this case, formula \eqref{integrazione per parti 1} ensures that for any time-dependent function $\beta(t)$ and any bounded function $F \in C^2_b$, the following holds:
\begin{equation}\label{Vbeta_intro 1d}
    \mathbb{E}_{\mathbb{P}}\left[F(W_t)\int_0^t -\beta'(s) \, dW_s\right] + \mathbb{E}_{\mathbb{P}}[\beta(t)F'(W_t)] - \mathbb{E}_{\mathbb{P}}[\beta(0)F'(W_0)] = 0.
\end{equation}
By setting $\beta(t) = t$, identity \eqref{Vbeta_intro 1d} reduces to:
\[
    \mathbb{E}_{\mathbb{P}}[W_t F(W_t)] = t \mathbb{E}_{\mathbb{P}}[F'(W_t)].
\]
As expected, this result coincides exactly with the Brownian Stein identity \eqref{SteinBM target}.\\
Stein's identity is not the only classical result recoverable from \eqref{integrazione per parti 1}. By varying the choices of parameters, we also obtain other well-known invariance properties: Isserlis' theorem, Lévy's stochastic area, and much more (see Section \ref{Integration by parts and Stein identities}). Indeed, classical identities of Gaussian analysis emerge here as an unified manifestations of the underlying symmetry groups associated with the Wiener process. The flexibility in choosing temporal parameters suggests that the integration by parts theorem acts as a generating mechanism for an infinite family of conservation laws for the specific diffusion at hand.

While, for simplicity, Section \ref{Integration by parts and Stein identities} of this paper focuses exclusively on Brownian motion, we stress that the integration by parts formula can be derived for generic diffusion processes, yielding equally compelling results.

\section{Finite stochastic transformations}\label{Finite stochastic transformations}
Throughout this paper, we denote by $\mathrm{SDE}_{\mu,\sigma}$ the following equation:
\begin{equation}\label{SDEms}
    dX_t = \mu(X_t,t) \, dt + \sigma(X_t,t) \, dW_t,
\end{equation}
where $W$ is an $d$-dimensional Brownian motion, $\mu(x,t) \in \mathbb{R}^n$, and $\sigma(x,t) \in \mathbb{R}^{n \times d}$. We denote by $DF$ the derivative of a function $F$, whose interpretation depends on the dimension: for $F: \mathbb{R} \to \mathbb{R}$, it represents the first derivative; for $F: \mathbb{R}^n \to \mathbb{R}$, it denotes the gradient; and for $F: \mathbb{R}^n \to \mathbb{R}^d$, it corresponds to the Jacobian matrix. Similarly, we denote by $D^2 F$ the second derivative (representing the standard second derivative or the Hessian matrix, depending on the dimension).\\
A stochastic transformation \( T \) acts simultaneously on a given SDE and on its solution process \( (X, W) \). Following the notation in \cite{paper2015, paper2020, paper2023}, we denote by \( E_T \) the action on the SDE coefficients and by \( P_T \) the action on the solution process. This structure ensures consistency: if \( (X, W) \) is a (weak) solution to \( SDE_{\mu, \sigma} \), then the transformed process \( P_T(X, W) := (P_T(X), P_T(W)) \) is a (weak) solution to the transformed equation \( E_T(SDE_{\mu, \sigma}) := SDE_{E_T(\mu), E_T(\sigma)} \).

We distinguish between \emph{strong} and \emph{weak} stochastic transformations. A strong transformation preserves the Brownian motion (i.e., \( P_T(W) = W \)) and the underlying probability space. Conversely, a weak transformation modifies the Brownian component (\( P_T(W) \neq W \)) or the filtered probability space; in this latter case, well-posedness requires verifying that \( P_T(W) \) remains a Brownian motion, possibly under a new filtration or probability measure.

\begin{definizione}\label{def trasf}
    Let \( M, M' \) be open subsets of \( \mathbb{R}^n \). Consider  \( SDE_{\mu,\sigma} \) \eqref{SDEms}, driven by an \( m \)-dimensional Brownian motion \( W\).
    A (finite) stochastic transformation from \( M \) to \( M' \) is a quadruple \( T=(\Phi, f, B, h) \) of smooth functions, whose components act as follows:
    
    \begin{itemize}
       
        \item \textbf{Spatial diffeomorphism (\(\Phi\)):} The function \( \Phi: M \times \mathbb{R}_+ \to M' \) defines a time-dependent spatial coordinate change \( T_1 \). It leaves the probability space unchanged. Its action is:
        \[ 
        P_{T_1}(X)_t = \Phi(X_t,t), \quad P_{T_1}(W)_t = W_t, 
        \]
        \[ 
        E_{T_1}(\mu) = L_t(\Phi)\circ (\Phi^{-1},\mathrm{id}), \quad E_{T_2}(\sigma) = (D(\Phi) \sigma)\circ (\Phi^{-1},\mathrm{id}), 
        \]
        where \( L_t = \partial_t + \sum\limits_{i=1}^n\mu_i \partial_i + \sum\limits_{i,j=1}^n\frac{1}{2} (\sigma \sigma^T )_{ij}\partial_{ij} \) is the infinitesimal generator of \( SDE_{\mu,\sigma}\).
        
        \item \textbf{Time change (\(f\)):} The strictly positive function \( f': \mathbb{R}_+ \to \mathbb{R}_+ \) defines an absolutely continuous time change \( f(t) := \int_0^t f'(s)ds \). This transformation, denoted by \( T_2 \), changes the filtration from \( \mathcal{F}_t \) to \( \mathcal{F}_{f^{-1}(t)} \). Its action is:
        \[ 
        P_{T_2}(X)_t = X_{f^{-1}(t)}, \quad P_{T_2}(W)_t = \int_0^{f^{-1}(t)}\sqrt{f'(s)}dW_s, 
        \]
        \[ 
        E_{T_2}(\mu) = \frac{\mu}{f'}\circ (\mathrm{Id},f^{-1}), \quad E_{T_2}(\sigma) = \frac{\sigma}{\sqrt{f'}}\circ (\mathrm{Id},f^{-1}). 
        \]

        \item \textbf{Random rotation (\(B\)):} The function \( B: M \times \mathbb{R}_+ \to SO(d) \) defines a rotation of the Brownian motion \( T_3 \). It preserves the probability space. Its action is:
        \[ 
        P_{T_3}(X) = X, \quad P_{T_3}(W)_t = \int_0^t B(X_s,s) dW_s, 
        \]
        \[ 
        E_{T_3}(\mu) = \mu, \quad E_{T_3}(\sigma) = \sigma B^{-1}. 
        \]

        \item \textbf{Drift change/Girsanov (\(h\)):} The function \( h: M \times \mathbb{R}_+\to \mathbb{R}^d \) defines a transformation \( T_4 \) that changes the measure from \( \mathbb{P} \) to \( \mathbb{Q} \) via the Radon-Nikodym derivative:
        \[ 
        \frac{d \mathbb{Q}}{d \mathbb{P}}\Big|_{\mathcal{F}_t} = \exp \left( \int_0^t h(X_s,s) \cdot dW_s - \frac{1}{2} \int_0^t |h(X_s,s)|^2 ds \right). 
        \]
        Assuming regularity conditions to ensure non-explosivity (see \cite{paper2020}), the action of \( T_4\) is:
        \[ 
        P_{T_4}(X) = X, \quad P_{T_4}(W)_t = W_t - \int_0^t h(X_s,s)dt, 
        \]
        \[ 
        E_{T_4}(\mu) = \mu + \sigma h, \quad E_{T_4}(\sigma) = \sigma. 
        \]
    \end{itemize}
    For each $i=1,\dots,4$, the transformation $T_i$ is well-defined. Specifically, the transformed driving Brownian motion, $P_{T_i}(W)$, remains a Brownian motion with respect to the filtered probability space induced by $T_i$. For the non-trivial cases, we refer to~\cite{oksendal} for time change and to~\cite{baldi} for rotation and change of measure.
\end{definizione}

The general stochastic transformation is obtained by composing these elementary actions.

\begin{teorema}[Double action of the transformation]\label{trasformate}
    Let \( (X,W)\) be a (weak) solution to \( SDE_{\mu,\sigma}\) and let \( T=(\Phi, f, B, h) \) be a stochastic transformation. Then, the transformed process \( P_T(X,W) \) is a solution to \( E_T(SDE_{\mu,\sigma}) \).
    
    The transformed process \( P_T(X,W):=(P_T(X),P_T(W))\)is defined by:
    \begin{align*}
        P_{T}(X)_t &= \Phi(X_{f^{-1}(t)},f^{-1}(t)), \\
        P_{T}(W)_t &= \int_0^{f^{-1}(t)}\sqrt{f'(s)}B(X_s,s)(dW_s - h(X_s,s) ds),
    \end{align*}
    where \( P_T(W) \) is a Brownian motion with respect to the filtration \( \mathcal{F}'_t=\mathcal{F}_{f^{-1}(t)} \) and the probability measure \( \mathbb{Q} \) defined by the density:
    \[ 
    \frac{d \mathbb{Q}}{d \mathbb{P}}\Big|_{\mathcal{F}_t} = \exp \left( \int_0^t h(X_s,s) \cdot dW_s - \frac{1}{2} \int_0^t |h(X_s,s)|^2 ds \right).
    \]
    The transformed coefficients of \( E_T(SDE_{\mu,\sigma}):=SDE_{E_T(\mu), E_T(\sigma)}\) are given by:
    \begin{align*}
        E_{T}(\mu) &= \left[ \frac{1}{f'} \big( L_t(\Phi) + D(\Phi) \sigma h \big) \right] \circ (\Phi^{-1},f^{-1}), \\
        E_{T}(\sigma) &= \left[ \frac{1}{\sqrt{f'}} D(\Phi) \sigma B^{-1} \right] \circ (\Phi^{-1},f^{-1}).
    \end{align*}
\end{teorema}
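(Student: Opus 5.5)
The plan is to realize $T$ as the composition of the four elementary transformations of Definition \ref{def trasf}, in the order $T = T_2\circ T_1\circ T_3\circ T_4$: first the Girsanov change $h$, then the rotation $B$, then the spatial diffeomorphism $\Phi$, and finally the time change $f$. At each stage we already know from Definition \ref{def trasf} that the transformed pair solves the transformed equation, and that the transformed driving process is a Brownian motion for the relevant filtered probability space. So the statement reduces to two checks. First, composing the process actions gives the displayed formulas for $P_T(X)$ and $P_T(W)$. Second, composing the coefficient actions gives the displayed $E_T(\mu),E_T(\sigma)$.

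The steps go as follows.
\begin{enumerate}
\item Apply $T_4$. Under $\mathbb{Q}$, Girsanov's theorem shows that $W^{(1)}_t=W_t-\int_0^t h(X_s,s)\,ds$ is an $(\mathcal F_t,\mathbb{Q})$-Brownian motion, and $X$ solves $dX=(\mu+\sigma h)dt+\sigma dW^{(1)}$.
\item Apply $T_3$. The process $W^{(2)}_t=\int_0^t B(X_s,s)\,dW^{(1)}_s$ is again an $(\mathcal F_t,\mathbb{Q})$-Brownian motion by Lévy's characterization, since $BB^T=I$. Moreover $dX=(\mu+\sigma h)dt+\sigma B^{-1}dW^{(2)}$.
\item Apply $T_1$. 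Itô's formula for $\Phi(X_t,t)$ gives drift $L_t(\Phi)+D\Phi\,\sigma h$. Here the generator $L_t$ is the one of the original equation: the second-order part depends only on $\sigma\sigma^T=\sigma B^{-1}(\sigma B^{-1})^T$, and the extra first-order term $\sigma h$ contributes $D\Phi\,\sigma h$. The diffusion becomes $D\Phi\,\sigma B^{-1}$. Expressing everything in terms of $Y_t=\Phi(X_t,t)$ produces the composition with $(\Phi^{-1},\mathrm{id})$.
\item Apply $T_2$. Using the time-change formula, the process $\tilde Y_t=Y_{f^{-1}(t)}$ solves the equation driven by $\tilde W_t=\int_0^{f^{-1}(t)}\sqrt{f'(s)}\,dW^{(2)}_s$, with coefficients divided by $f'$ and $\sqrt{f'}$ respectively and evaluated at time $f^{-1}(t)$. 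Here $\tilde W$ is a Brownian motion for $\mathcal F_{f^{-1}(t)}$ under $\mathbb{Q}$.
\end{enumerate}
Substituting $dW^{(2)}_s=B(X_s,s)(dW_s-h\,ds)$ yields exactly the stated $P_T(W)$. The stated $P_T(X)_t=\Phi(X_{f^{-1}(t)},f^{-1}(t))$ follows directly, and the coefficients evaluated along $(\Phi^{-1},f^{-1})$ give $E_T(\mu)$ and $E_T(\sigma)$.

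The main obstacle I expect is the bookkeeping of evaluation points and measures. Three points need care. First, the composition $\circ(\Phi^{-1},f^{-1})$ must be read correctly: $\Phi^{-1}$ is taken at time $f^{-1}(t)$, meaning $x\mapsto\Phi^{-1}(x,f^{-1}(t))$. Second, the Girsanov density is defined on $\mathcal F_t$, while after the time change the filtration is $\mathcal F_{f^{-1}(t)}$; one must check that the measure $\mathbb{Q}$ is the same on the union of these $\sigma$-algebras, so the Brownian property survives the time change. This follows because the time-change result in \cite{oksendal} holds for any filtered probability space, applied here under $\mathbb{Q}$. Third, the random time change of the stochastic integral must be justified. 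It applies because $f$ is deterministic and absolutely continuous, so $\int_0^{f^{-1}(t)}\sqrt{f'}\,dW^{(2)}$ has quadratic variation $t$; Lévy's characterization then closes the argument. For the non-explosivity of the Girsanov exponential, I would simply invoke the regularity assumptions already cited from \cite{paper2020}.
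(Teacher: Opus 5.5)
Your proposal is correct and is essentially the paper's own argument. The paper proves the theorem by iterating the four elementary actions of Definition \ref{def trasf}, deferring details to \cite{paper2020}, and your ordering $T=T_2\circ T_1\circ T_3\circ T_4$ (Girsanov, rotation, diffeomorphism, time change) is consistent with the composition law of Definition \ref{group structure}, so the components $(\Phi,f,B,h)$ enter unchanged at each stage.
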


\begin{proof}
    The result follows by iterating the actions of the four components described in Definition \ref{def trasf}. See \cite{paper2020} for details.
\end{proof}

\begin{osservazione}
    The distinction between strong and weak transformations mirrors that of strong and weak solutions. A \emph{strong} stochastic transformation acts on a fixed probability space and filtration, leaving the Brownian motion unchanged. This requires \( h=0 \) (no measure change), \( f=\mathrm{id} \) (no change in the filtration), and \( B=I_d \) (no change in the driving Brownian motion). Thus, a transformation is strong if and only if \( T=(\Phi, \mathrm{id}, I_d, 0) \).
\end{osservazione}

\begin{osservazione}
    Any elementary transformation can be viewed as a subclass of the general tuple \( T \):
    a diffeomorphism corresponds to  \( (\Phi, \mathrm{id}, I_d, 0) \)), a time change to \( (\mathrm{Id}, f, I_d, 0) \), a random rotation to \( (\mathrm{Id}, \mathrm{id}, B, 0) \), and a measure change to \( (\mathrm{Id}, \mathrm{id}, I_d, h) \).
\end{osservazione}

\subsection{From Brownian motion to Geometric Brownian motion via strong transformation}
%Geometric Brownian motion as a strong transformation of Brownian motion}
\label{Geometric Brownian motion}
Consider a one-dimensional Brownian motion \( dX_t = dW_t \), with drift \( \mu_0:=0\) and diffusion coefficient \( \sigma_0:=1\). Applying the strong transformation \( T=(\Phi, \mathrm{id}, 1, 0) \) defined by
\[
\Phi(x,t) = z_0 \exp\Big( (\mu - \tfrac{1}{2}\sigma^2)t + \sigma x \Big)
\]
yields the Geometric Brownian Motion (GBM). Indeed, by Theorem \ref{trasformate}, the transformed process \( Z_t := P_T(X)_t = \Phi(X_t, t) \) solves the SDE determined by the transformed coefficients \( \bar{\mu}:=E_T(\mu_0) \) and \( \bar{\sigma}:=E_T(\sigma_0) \).

Computing the action of the generator \( L_t = \partial_t + \frac{1}{2}\partial_{xx} \) and the derivative \( D = \partial_x \) on \( \Phi \), we obtain:
\begin{align*}
    L_t(\Phi) &= \partial_t \Phi + \tfrac{1}{2} \partial_{xx} \Phi =  (\mu - \tfrac{1}{2}\sigma^2)\Phi + \tfrac{1}{2}  \sigma^2 \Phi = \mu \Phi, \\
    D(\Phi) &= \sigma \Phi.
\end{align*}
The transformed coefficients are obtained by composing with \( \Phi^{-1} \) (see Definition \ref{def trasf}):
\[
\bar{\mu}(z,t):=E_T(\mu_0)(z,t) = L_t(\Phi)\circ \Phi^{-1} = \mu z, \qquad \bar{\sigma}(z,t):=E_T(\sigma_0)(z,t) = D(\Phi)\circ \Phi^{-1} = \sigma z.
\]
Thus, \( T\) yields the standard geometric Brownian motion equation:
\begin{equation}\label{GBM}
    dZ_t = \mu Z_t \, dt + \sigma Z_t \, dW_t, \quad Z_0 = z_0.
\end{equation}
Since \( T \) is strong, the underlying filtered probability space remains unchanged. The generalization to the \( n \)-dimensional case is straightforward and follows the same logic component-wise.

\subsection{From Brownian motion to Brownian Bridge via weak transformation }
%Brownian Bridge as a weak transformation of Brownian motion}
\label{Example BB}

 Consider an \( n \)-dimensional Brownian motion \( d\underline{X}_t=d\underline{W}_t \), with \( \underline{X}=(X^1,\cdots,X^n)^T\), \( \underline{W}=(W^1,\cdots,W^n)^T\), drift \( \mu_0:=\underline{0}\) and diffusion coefficient \( \sigma_0:=I_n\). Let us apply the weak stochastic transformation \( T=(\Phi,f,I_n,h) \) defined by:
\begin{equation*}
    f(t)=\frac{t T^2}{1+tT}, \quad \Phi(\underline{x},t)=(T-f(t))\underline{x}, \quad h(\underline{x},t)=\frac{f'(t)}{T-f(t)}\underline{x}.
\end{equation*}
According to Theorem \ref{trasformate}, the transformed process \( P_T(\underline{X}, \underline{W}) \) solves the SDE with coefficients \( E_T(\mu, \sigma) \).
The transformed solution process is given by:
\begin{equation*}
    \underline{Z}_t := P_T(\underline{X})_t = \Phi(\underline{X}_{f^{-1}(t)}, f^{-1}(t)) = (T-t)\underline{X}_{f^{-1}(t)}.
\end{equation*}
The transformed driving noise is:
\begin{equation}\label{W bb}
    P_T(\underline{W})_t = \int_0^{f^{-1}(t)} \sqrt{f'(s)}\, (d\underline{W}_s - h(\underline{X}_s,s)\,ds) 
    = \tilde{W}_t - \int_0^t \frac{\underline{Z}_s}{T-s}\,ds,
\end{equation}
where \( \tilde{W}_t := \int_0^{f^{-1}(t)} \sqrt{f'(s)}\,dW_s \) is a Brownian motion under the transformed filtration (according to Definition \ref{def trasf}).

We now compute the transformed coefficients \( E_T(\mu_0, \sigma_0) \). The new drift is:
\begin{equation*}
    \mu' := E_T(\mu_0) = \Big[ \frac{1}{f'}\Big(L_t(\Phi)+D\Phi \cdot h\Big)\Big]\circ (\Phi^{-1}, f^{-1})
    = \Big[ \frac{1}{f'}\Big(-f'(t) \underline{x} + (T-f(t))\frac{f'(t)}{T-f(t)}\underline{x}\Big)\Big]\circ (\Phi^{-1}, f^{-1}) = \underline{0}.
\end{equation*}
The new diffusion coefficient is:
\begin{equation*}
    \sigma' := E_T(\sigma_0) = \Big[\frac{1}{\sqrt{f'}}(D\Phi) I_n \Big]\circ (\Phi^{-1}, f^{-1}) 
    = \Big[\frac{T-f(t)}{T-f(t)} I_n \Big]\circ (\Phi^{-1}, f^{-1}) = I_n.
\end{equation*}
Consequently, the transformed process satisfies the SDE:
\begin{equation*}
 d\underline{Z}_t = \mu' dt + \sigma' dP_T(\underline{W})_t = dP_T(\underline{W})_t.
\end{equation*}
Substituting the expression for \( P_T(\underline{W})_t \) from \eqref{W bb}, we obtain:
\begin{equation*}
    d\underline{Z}_t = -\frac{\underline{Z}_t}{T-t}\,dt + d\tilde{W}_t.
\end{equation*}
Thus, the transformation \( T \) applied to a standard Brownian motion yields a \emph{Brownian Bridge}. 

It is worth noting that since \( E_T(\mu_0)=\mu_0 \) and \( E_T(\sigma_0)=\sigma_0 \), the structural form of the SDE is preserved: \( \underline{Z}_t \) solves a Brownian motion SDE driven by \( P_T(\underline{W}) \). However, \( P_T(\underline{W}) \) is a Brownian motion only under the probability measure \( \mathbb{Q} \) defined by the Doléans-Dade exponential associated with \( h \). This reflects the fact that a Brownian Bridge can be viewed as a Brownian motion conditioned on reaching \( 0 \) at time \( T \) (see \cite{ChetriteTouchette2015}).

We refer to such transformations—those preserving the form of the SDE coefficients—as \textit{symmetries}. These are intrinsically linked to the invariance properties of the diffusion process and will be discussed extensively in the following section.

\section{Invariance properties and finite symmetries}\label{Invariance properties and finite symmetries}

Example \ref{Example BB}  shows that specific transformations can preserve the structure of an SDE, reflecting intrinsic properties of the underlying diffusion such as  conditioning or self-similarity (see Example \ref{Example time invariance BM}). We define \emph{symmetries} as stochastic transformations that leave the original SDE invariant. Following the framework in \cite{paper, paper2020, paper2023}, an SDE is (strongly/weakly) invariant if the transformation \(T\) preserves the set of its (strong/weak) solutions. A broader notion, introduced in \cite{papernuovo}, requires only the preservation of the solution's law, linking symmetry to the invariance of an entire family of SDEs rather than of a single equation.

\begin{definizione}\label{def symmetries}
Consider the stochastic differential equation \(SDE_{\mu,\sigma}\)
\[
dX_t = \mu(X_t,t)\,dt + \sigma(X_t,t)\,dW_t
\]
on the filtered probability space \( (\Omega, \mathcal{F}, (\mathcal{F}_t)_t, \mathbb{P})\).

\begin{itemize}
    \item A strong stochastic transformation \(T = (\Phi, \mathrm{id}, I_d, 0)\) is a \emph{strong (finite) symmetry} if, for every strong solution \(X\) with driving Brownian motion \(W\), the transformed process \(P_T(X)\) remains a solution to \(SDE_{\mu,\sigma}\) in the same space driven by \(P_T(W)=W\). Being composed of diffeomorphisms, \(T\) modifies neither the probability space nor the Brownian motion.
    
    \item A weak stochastic transformation \(T = (\Phi, f, B, h)\) is a \emph{weak (finite) symmetry} if, for every weak solution \((X,W)\), the transformed pair \(P_T(X,W)\) remains a weak solution to \(SDE_{\mu,\sigma}\) in \( (\Omega, \mathcal{F}, (\mathcal{F'}_t)_t, \mathbb{Q})\), where \(\mathbb{Q}\) and \(\mathcal{F'}_t\) are the measure and the filtration induced by \(T\), respectively.

    \item A weak stochastic transformation \(T = (\Phi, f, B, h)\) is a \(\mathcal{G}\)-weak symmetry if it preserves the solution set of the martingale problem associated with \((\mu, \sigma\sigma^T)\). Equivalently, for every weak solution \((X,W)\), the transformed process \(P_T(X)\) has the same law as \(X\).
\end{itemize}
\end{definizione}

Imposing these invariance conditions characterizes symmetries via the coefficients of the SDE.

\begin{teorema}\label{equazioni determinanti finite}
Let \(SDE_{\mu,\sigma}\) be defined as above.

\begin{itemize}
    \item \(T = (\Phi, \mathrm{id}, I_d, 0)\) is a strong symmetry if and only if
    \[
    \mu = L_t(\Phi)\circ(\Phi^{-1}, \mathrm{id}), \qquad \sigma = \big(D\Phi \cdot \sigma\big)\circ(\Phi^{-1},\mathrm{id}).
    \]

    \item \(T = (\Phi, f, B, h)\) is a weak symmetry if and only if
    \[
    \mu = \Big(\frac{1}{f'}\big[\,L_t(\Phi) + D\Phi\,\sigma h\,\big]\Big)\circ(\Phi^{-1},f^{-1}), \qquad \sigma = \Big(\frac{1}{\sqrt{f'}}\, D\Phi\, \sigma B^{-1}\Big)\circ(\Phi^{-1},f^{-1}).
    \]

    \item \(T = (\Phi, f, B, h)\) is a \(\mathcal{G}\)-weak symmetry if and only if
    \[
    \mu = \Big(\frac{1}{f'}\big[\,L_t(\Phi) + D\Phi\,\sigma h\,\big]\Big)\circ(\Phi^{-1},f^{-1}), \qquad \sigma\sigma^T = \Big(\frac{1}{f'}\, D\Phi\, \sigma\sigma^T\, D\Phi^T\Big)\circ(\Phi^{-1},f^{-1}).
    \]
\end{itemize}
\end{teorema}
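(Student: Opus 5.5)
The plan is to reduce all three characterizations to Theorem \ref{trasformate}, which gives the transformed equation $E_T(SDE_{\mu,\sigma})$ explicitly. By Theorem \ref{trasformate}, for every (weak) solution $(X,W)$ the pair $P_T(X,W)$ solves $SDE_{E_T(\mu),E_T(\sigma)}$ on $(\Omega,\mathcal{F},(\mathcal{F}'_t)_t,\mathbb{Q})$. Hence $T$ is a symmetry in the respective sense if and only if the transformed equation coincides, in the appropriate sense, with $SDE_{\mu,\sigma}$. Each bullet of the statement is exactly the coincidence condition written out, so the proof is a sufficiency argument plus a necessity argument for each notion.

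\emph{Sufficiency.} If $E_T(\mu)=\mu$ and $E_T(\sigma)=\sigma$ (the formulas in the first two bullets), then Theorem \ref{trasformate} says at once that $P_T(X,W)$ solves $SDE_{\mu,\sigma}$. In the strong case the Brownian motion and the space are untouched, so only the diffeomorphism part (with $f=\mathrm{id}$, $h=0$, $B=I_d$) appears. For the $\mathcal{G}$-weak case I will use that the law of a weak solution is determined by the martingale problem for $(\mu,\sigma\sigma^T)$. The generator of $P_T(X)$ has drift $E_T(\mu)$ and diffusion matrix
\[
E_T(\sigma)E_T(\sigma)^T=\Big(\tfrac{1}{f'}D\Phi\,\sigma B^{-1}B^{-T}\sigma^T D\Phi^T\Big)\circ(\Phi^{-1},f^{-1}).
\]
Since $B$ is orthogonal, $B^{-1}B^{-T}=I_d$, so the rotation drops out. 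This explains the third set of equations: if they hold, $P_T(X)$ solves the same martingale problem, and equality of laws follows (given well-posedness of the martingale problem).

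\emph{Necessity.} This is the step I expect to be the main obstacle. Suppose $P_T(X)$ solves $SDE_{\mu,\sigma}$ for every solution. Then, for each solution, $P_T(X)$ is a semimartingale with two decompositions, one with coefficients $(\mu,\sigma)$ and one with $(E_T(\mu),E_T(\sigma))$. Uniqueness of the Doob–Meyer/semimartingale decomposition gives the following identities along the paths, $dt\otimes d\mathbb{Q}$-a.e.:
\[
\mu(P_T(X)_t,t)=E_T(\mu)(P_T(X)_t,t),
\]
and, comparing martingale parts against the same driving Brownian motion $P_T(W)$,
\[
\sigma(P_T(X)_t,t)=E_T(\sigma)(P_T(X)_t,t).
\]
In the $\mathcal{G}$-weak case only the quadratic variation is available, so the comparison yields only the equality for $\sigma\sigma^T$.

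To pass from these pathwise identities to identities of functions on $M'\times\mathbb{R}_+$, I will vary the initial condition over all points of $M$, using the smoothness (hence continuity) of the coefficients and the fact that for small times the solution started at $x_0$ has support in a neighborhood of $x_0$. Evaluating at $t\to 0$ then gives equality at every point of $M'\times\mathbb{R}_+$, after accounting for the time shift via $f^{-1}$. The care needed here concerns the support of the law and the time-dependent initial data, which I would handle by also allowing solutions started at arbitrary times $t_0$.
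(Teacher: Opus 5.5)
Your proposal follows the paper's own route: everything is reduced to Theorem~\ref{trasformate}, and the three sets of equations are read off as $E_T(\mu)=\mu$ and $E_T(\sigma)=\sigma$, resp.\ $E_T(\sigma)E_T(\sigma)^T=\sigma\sigma^T$, where the rotation drops out because $B$ is orthogonal. The paper only asserts the necessity direction and defers to \cite{paper2020,papernuovo}; your argument via uniqueness of the semimartingale decomposition, combined with solutions started at arbitrary $(x_0,t_0)$, is a sound way to fill it in, and you are right that arbitrary starting times are needed, since letting $t\to 0$ for solutions started at time $0$ only gives the identities on $M'\times\{0\}$.
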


\begin{proof}
Proofs for the first two statements are in \cite{paper2020}, and the third in \cite{papernuovo}. The logic relies on Theorem~\ref{trasformate}, which states that \(P_T(X,W)\) solves the transformed equation \(SDE_{E_T(\mu),\,E_T(\sigma)}\).
\begin{itemize}
    \item For strong and weak symmetries, invariance requires the transformed coefficients to match the original ones: \(\mu = E_T(\mu)\) and \(\sigma = E_T(\sigma)\). Substituting the explicit forms of \(E_T\) yields the stated equations.
    \item For \(\mathcal{G}\)-weak symmetries, preserving the law implies preserving the infinitesimal generator \(L\). Thus, we require \(\mu = E_T(\mu)\) and \(\sigma\sigma^T = E_T(\sigma)E_T(\sigma)^T\).
\end{itemize}
\end{proof}

The relations in Theorem~\ref{equazioni determinanti finite} are called \emph{finite determining equations}. They form a highly nonlinear system. To overcome this complexity, we turn to infinitesimal transformations. This approach linearizes the problem: instead of solving nonlinear functional equations for \(T\), we determine the generator of a one-parameter group of transformations. This motivates the algebraic structure introduced in Section \ref{Algebraic structure of stochastic transformations}.

\begin{osservazione}\label{equivalenza wgw}
The symmetry notions in Definition~\ref{def symmetries} follow the hierarchy $\text{Strong} \subset \text{Weak} \subset \mathcal{G}\text{-weak}$. Strong symmetries are weak symmetries which, in particular, preserve the driving Brownian motion and the filtered probability space. Weak symmetries are $\mathcal{G}$-weak as they preserve the law. The converses generally fail: a weak symmetry is strong only if \( f=\mathrm{id}\), $B = I_d$, and $h = 0$, while a $\mathcal{G}$-weak symmetry is not necessarily weak unless \( E_T(\sigma)=\sigma\). This is because the determining equations for $\mathcal{G}$-weak symmetries do not constrain the rotation matrix ($B$), making any rotation a trivial $\mathcal{G}$-weak symmetry: if \( T=(\mathrm{Id},\mathrm{id}, B,0)\) is a rotation, then from Definition \ref{def trasf} \( E_T(\sigma)=\sigma B^{-1}\); since \( B\) is a rotation matrix, by construction \( B B^T=I_d\), so it is immediate to verify that \( E_T(\sigma) E_T(\sigma)^T=\sigma \sigma^T\).

Equivalence between weak and $\mathcal{G}$-weak symmetries holds only under structural conditions, such as the invertibility of $\sigma \sigma^{T}$. In this case, decomposing and using Theorem \ref{equazioni determinanti finite}
\[
\sigma = \sigma\sigma^T \sigma (\sigma^T\sigma)^{-1}=\left( \tfrac{1}{f'} D(\Phi) \sigma \sigma^{T} D(\Phi)^{T} \right) \circ (\Phi^{-1},f^{-1})\cdot  \sigma (\sigma^{T}\sigma)^{-1},
\]
we find that to satisfy the weak symmetry condition, $B$ is uniquely determined by
\[
B^{-1} \circ \Phi^{-1} = \left( \tfrac{1}{\sqrt{f'}} \sigma^{T} D(\Phi)^{T} \right) \circ (\Phi^{-1},f^{-1})\cdot  \sigma (\sigma^{T}\sigma)^{-1},
\]
which yields an orthogonal matrix. See \cite{papernuovo} for the connection to the ``square root of a matrix field'' problem.
\end{osservazione}

\begin{osservazione}
If $X$ satisfies $dX_t=\mu dt+\sigma dW_t$ and $B \in SO(d)$, then $X$ is also a weak solution to $dX_t=\mu dt+(\sigma B^{-1}) dW'_t$, driven by $W'_t = \int_0^t B dW_s$. The inclusion of rotations implies that the law of a weak solution is no longer identified by the single \( SDE_{\mu,\sigma}\), but by the whole equivalence class $\mathcal{Gauge}(SDE_{\mu,\sigma}) := \{ SDE_{\mu,\sigma B^{-1}} \mid B \in SO(d) \}$. Consequently, $\mathcal{G}$-weak symmetries are transformations preserving the solution set of $\mathcal{Gauge}(SDE_{\mu,\sigma})$. See \cite{papernuovo, albeverio} for the connection with gauge symmetries.
\end{osservazione}

\subsection{Reflection of Brownian motion}\label{Example reflection}

It is well known that if $\underline{W}_t$ is an $n$-dimensional Brownian motion, then $\underline{Y}_t := -\mathbf{W}_t$ is also a Brownian motion on the same filtered probability space (see \cite{baldi}). We aim to recover this reflection as a symmetry of the SDE $d\underline{X}_t = d\underline{W}_t$, where $\mu = \underline{0}$ and $\sigma = I_n$.

First, consider the transformation $T = (\Phi, \mathrm{id}, I_n, 0)$ with $\Phi(\underline{x}) = -\underline{x}$. This map involves only a diffeomorphism of the state space, implying $\Phi(\underline{X}_t) = -\underline{W}_t$. According to Theorem \ref{trasformate}, the coefficients of the transformed SDE are:
\[
E_T(\mu) =L_t(\Phi)\circ\Phi^{-1}= \tfrac{1}{2} D^2\Phi \circ \Phi^{-1} = \underline{0} = \mu, \qquad 
E_T(\sigma) = (D\Phi \cdot \sigma) \circ \Phi^{-1} = -I_n.
\]
While $E_T(\sigma) \neq \sigma$, we observe that $E_T(\sigma) E_T(\sigma)^T = (-I_n)(-I_n)^T = I_n = \sigma \sigma^T$. Thus, $T$ is a $\mathcal{G}$-weak symmetry for Definition \ref{def symmetries}. In this context, $T$ preserves the law of the solution process but fails to preserve the SDE structure, as it interprets the reflected process via $d\underline{Y}_t = -d\underline{W}_t$.

Since $\sigma\sigma^T = I_n$ is invertible, Remark \ref{equivalenza wgw} ensures the equivalence between weak and $\mathcal{G}$-weak symmetries. To recover the weak symmetry $T_1 = (\Phi, B, 1, 0)$, following the Remark \ref{equivalenza wgw} we determine the orthogonal matrix $B$ such that:
\[
B^{-1} \circ \Phi^{-1} = (\sigma^T D\Phi^T) \circ \Phi^{-1} \cdot \sigma (\sigma^T \sigma)^{-1} = -I_n,
\]
yielding $B = -I_n$. The resulting transformation $T_1 = (\Phi, \mathrm{id}, -I_n, 0)$ maps $\underline{X}$ to $-\underline{W}$ and preserves the SDE coefficients:
\[
E_{T_1}(\mu) = \mu, \qquad E_{T_1}(\sigma) = (D\Phi \cdot \sigma B^{-1}) \circ \Phi^{-1} = (-I_n)(I_n)(-I_n) = I_n = \sigma.
\]
Thus, $T_1$ is a weak symmetry according to Definition \ref{def symmetries}. It interprets the reflected process as $d\underline{Y}_t = d(-{\underline{W}}_t)$.

\begin{osservazione}
Both $T$ and $T_1$ are \emph{discrete} symmetries, since they cannot be generated by the flow of an infinitesimal generator. Consequently, they are recovered only as finite transformations (see \cite{Olver}).
\end{osservazione}

\section{Algebraic structure of stochastic transformations}\label{Algebraic structure of stochastic transformations}
We aim to endow the set of stochastic transformations with a group structure. Naturally, one could define composition and inversion in several different ways; however, the resulting algebraic structure should satisfy two key requirements:
\begin{enumerate}
    \item It must rest on a meaningful probabilistic foundation;
    \item It must allow for the formulation of an associated Lie algebra, thereby providing more powerful computational tools in a suitably linearized setting.
\end{enumerate}

\subsection{Probabilistic foundation of the algebraic structure}\label{probabilistic foundation of the algebraic structure}

The algebraic structure of stochastic transformations must be grounded in a well-defined probabilistic interpretation. Specifically, we require that the composition of transformations corresponds to the composition of the induced processes on solutions and coefficients. That is, for any \(T_1, T_2\), the composed transformation \(T_2 \circ T_1\) is the unique transformation satisfying:
\[
P_{T_2\circ T_1}(X,W) = P_{T_2}\!\left(P_{T_1}(X,W)\right)
\quad \text{and} \quad
E_{T_2\circ T_1}(\mu,\sigma)=E_{T_2}\!\left(E_{T_1}(\mu,\sigma)\right).
\]
Let \( T_1=(\Phi_1, f_1, B_1, h_1)\) and \( T_2=(\Phi_2, f_2, B_2, h_2)\). Assuming \( T_2\circ T_1=(\tilde{\Phi}, \tilde{f}, \tilde{B},\tilde{h})\), we compare the transformed solution process \(X\) and the driving noise \(W\).
First, for the process \(X\), we have:
\begin{equation}\label{P_T2T1}
    P_{T_2\circ T_1}(X)=\tilde{\Phi}(X_{\tilde{f}^{-1}(t)},\tilde{f}^{-1}(t)).
\end{equation}
Conversely, applying the transformations sequentially:
\begin{equation}\label{PT2PT1}
    P_{T_2}(P_{T_1}(X))_t = (\Phi_2\circ\Phi_1)(X_{(f_2\circ f_1)^{-1}(t)}, (f_2\circ f_1)^{-1}(t)).
\end{equation}
Comparing \eqref{P_T2T1} and \eqref{PT2PT1} identifies the spatial and temporal components:
\[
\tilde{\Phi}=\Phi_2\circ \Phi_1, \qquad \tilde{f}=f_2\circ f_1.
\]
Similarly, for the Brownian motion, the composed transformation yields
\begin{equation}\label{PT2T1W}
    P_{T_2\circ T_1}(W)=\int_0^{\tilde{f}^{-1}(t)}\sqrt{\tilde{f'}(s)}\tilde{B}(X_s)(dW_s - \tilde{h}(X_s,s) ds),
\end{equation}
while the sequential application gives
\begin{equation}\label{PT2PT1W}
\begin{split}
    P_{T_2}(P_{T_1}(W)) = \int_0^{(f_2\circ f_1)^{-1}(t)} &\sqrt{[(f'_2\circ f_1) f'_1](s)} \, [ (B_2\circ\Phi_1) B_1](X_s,s) \\
    &\cdot \Bigg( dW_s - \Big(h_1(X_s,s)+\sqrt{f'_1(s) }B_1^{-1}(X_s,s) (h_2 \circ \Phi_1)(X_s,s) \Big) ds \Bigg).
\end{split}
\end{equation}
Matching the terms in \eqref{PT2T1W} and \eqref{PT2PT1W} determines 
\[\tilde{B}=(B_2\circ\Phi_1) B_1, \qquad \tilde{h}=h_1+\sqrt{f_1'}B_1^{-1}\cdot (h_2\circ(\Phi_1,\mathrm{id})).\]  It is easy to verify that with these choices for the components of \(\tilde{T}\), we also have that \( E_{T_2\circ T_1}(\mu,\sigma)\equiv E_{T_2}(E_{T_1}(\mu,\sigma)).\)

To define a group structure, it remains to establish the neutral element and the inverse.
The neutral element \(T_0 = (\Phi_0, f_0, B_0, h_0)\) must satisfy \(T \circ T_0 = T = T_0 \circ T\), which yields
\begin{equation}\label{neutro}
\Phi_0 = \mathrm{Id}, \quad f_0=\mathrm{id} \quad B_0 = I_n, \quad h_0 = 0.
\end{equation}
The inverse \(T^{-1} = (\bar{\Phi}, \bar{f}, \bar{B}, \bar{h})\) is defined by \(T \circ T^{-1} = T^{-1}\circ T = T_0\). Solving the composition equations for the barred components results in:
\begin{equation}\label{inverso}
T^{-1} = \left( \Phi^{-1}, \; f^{-1}, \; (B\circ(\Phi^{-1},\mathrm{id}))^{-1}, \;  -\frac{1}{\sqrt{f'}} \, (B \cdot h\circ(\Phi^{-1},\mathrm{id})) \right).
\end{equation}
We summarize these results in the following definition.

\begin{definizione}\label{group structure}
The set of stochastic transformations forms a group with the following operations:
\begin{itemize}
    \item \textbf{Composition:} Given \( T_1=(\Phi_1, f_1, B_1, h_1)\) and \( T_2=(\Phi_2, f_2, B_2, h_2)\),
    \[
    T_2\circ{T_1} = \Big(\Phi_2\circ\Phi_1,\; f_2\circ f_1, \; (B_2\circ(\Phi_1,\mathrm{id}))\cdot{B_1},\;  h_1 + \sqrt{f'_1}B_1^{-1}\cdot(h_2\circ(\Phi_1,\mathrm{id})) \Big).
    \]
    \item \textbf{Inverse:} Given \( T=(\Phi, f, B, h) \),
    \[
    T^{-1} = \Big(\Phi^{-1},\; f^{-1}, \; (B\circ\Phi^{-1})^{-1},\;  -\frac{1}{\sqrt{f'}}(B\cdot{h}\circ(\Phi^{-1},\mathrm{id})) \Big).
    \]
    \item \textbf{Neutral Element:} \( T_0=(\mathrm{Id}, \mathrm{id}, I_n, 0) \).
\end{itemize}
\end{definizione}

\subsubsection{Geometric Brownian motion (again)}

We previously obtained the GBM via a strong transformation (Example~\ref{Geometric Brownian motion}). Here, we show it can also be constructed from a standard Brownian motion via a \emph{weak} transformation involving a change of probability measure. In this example, we will also illustrate how the group structure described in Definition \ref{group structure} is well-defined in terms of process composition and transformed coefficients.

Let the target GBM on \((\Omega, \mathcal{F}, (\mathcal{F}_t)_t, \mathbb{Q})\) be
\begin{equation}\label{GBM1}
dY_t = \mu Y_t\, dt + \sigma Y_t\, d\bar{W}_t,
\end{equation}
and the source standard Brownian motion on \((\Omega, \mathcal{F}, (\mathcal{F}_t)_t, \mathbb{P})\) be
\begin{equation}\label{BM1}
dX_t = dW_t, \quad (\text{i.e., } \mu_0 = 0, \sigma_0 = 1).
\end{equation}
Consider the weak transformation \(T = (\Phi, I_n, 1, h)\) defined by
\[
\Phi(x) = e^{\sigma x}, \qquad h = \frac{\mu}{\sigma} - \frac{\sigma}{2}.
\]

Applying Theorem~\ref{trasformate}, the transformed process \( (P_T(X), P_T(W)) \) is given by
\[
Y_t := \Phi(X_t) = e^{\sigma X_t}, \qquad \bar{W}_t := W_t - \int_0^t h\, ds = W_t - \left(\frac{\mu}{\sigma} - \frac{\sigma}{2}\right)t.
\]
The change of measure defining \(\mathbb{Q}\) is
\[
\frac{d\mathbb{Q}}{d\mathbb{P}}\Big|_{\mathcal{F}_t}
= \exp\left( \int_0^t h\, dW_s - \frac{1}{2}\int_0^t h^2\, ds \right)
= \exp\left( \left(\frac{\mu}{\sigma} - \frac{\sigma}{2}\right)W_t - \frac{1}{2}\left(\frac{\mu}{\sigma} - \frac{\sigma}{2}\right)^2 t \right).
\]
The transformed coefficients \( E_T(\mu_0, \sigma_0) \) are computed as follows:
\begin{align*}
\mu'(x) := E_T(\mu_0)&=\left(\tfrac{1}{2}D^2\Phi + D\Phi \cdot h \right)\circ \Phi^{-1}(x) 
= \left(\tfrac{1}{2}\sigma^2 e^{\sigma (\cdot)} + \sigma e^{\sigma (\cdot)}\left(\frac{\mu}{\sigma} - \frac{\sigma}{2}\right)\right)\circ \Phi^{-1}(x) = \mu x, \\
\sigma'(x) := E_T(\sigma_0) &=(D\Phi)\circ \Phi^{-1}(x) = (\sigma e^{\sigma (\cdot)})\circ \Phi^{-1}(x) = \sigma x.
\end{align*}
Thus, \(P_T(X,W)\) solves \eqref{GBM1}, confirming that \(T\) maps the standard Brownian motion to the GBM.\\
\\
Alternatively, \(T\) can be decomposed as \(T = T_2 \circ T_1\) (see Definition~\ref{group structure}), where:
\begin{itemize}
    \item \(T_1 = (\mathrm{Id}, I, 1, h)\): A pure Girsanov transformation (drift change).
    \item \(T_2 = (\Phi, I, 1, 0)\): A pure change of variable (Itô transformation).
\end{itemize}
 By Theorem~\ref{trasformate}, \(T_1\) maps \((X,W)\) to \((Z, \bar{W})\) where \(Z_t = X_t\) and \(\bar{W}_t\) is as defined above. The intermediate SDE on \((\Omega, \mathcal{F}, \mathbb{Q})\) has coefficients:
\[
\mu_1 = E_{T_1}(\mu_0) = h = \frac{\mu}{\sigma}-\frac{\sigma}{2}, \qquad \sigma_1 = E_{T_1}(\sigma_0) = \sigma_0 = 1.
\]
 Applying \(T_2\) to the result of \(T_1\), we transform \(Z_t\) into \(Y_t = \Phi(Z_t) = e^{\sigma X_t}\), while the Brownian motion \(\bar{W}\) remains unchanged (since \(h_2=0\)). The final coefficients are:
\begin{align*}
\mu_2 &= E_{T_2}(\mu_1) = \left(\mu_1 D\Phi + \tfrac{1}{2} D^2\Phi\right)\circ \Phi^{-1} 
= \left( \left(\tfrac{\mu}{\sigma}-\tfrac{\sigma}{2}\right)\sigma \Phi + \tfrac{1}{2}\sigma^2 \Phi \right)\circ \Phi^{-1} = \mu x, \\
\sigma_2 &= E_{T_2}(\sigma_1) = (D\Phi)\circ \Phi^{-1} = \sigma x.
\end{align*}
This confirms that \(E_{T_2 \circ T_1} = E_{T_2} \circ E_{T_1}\) and \(P_{T_2 \circ T_1} = P_{T_2} \circ P_{T_1}\), yielding the same GBM solution \eqref{GBM1}. The process is summarized in the following diagram:

\[
\begin{tikzcd}
{} \arrow[dd, "T=T_2\circ T_1"', bend right=50] & dX_t=dW_t \quad \text{in} \quad (\Omega, \mathcal{F}, (\mathcal{F}_t)_t,\mathbb{P}) \arrow[d, "T_1"] \\
{} & dZ_t=(\frac{\mu}{\sigma}-\frac{\sigma}{2})dt+d\bar{W_t} \quad \text{in} \quad  (\Omega, \mathcal{F}, (\mathcal{F}_t)_t,\mathbb{Q}) \arrow[d, "T_2"] \\
{} & dY_t = \mu Y_t dt + \sigma Y_t d\bar{W}_t \quad \text{in} \quad  (\Omega, \mathcal{F}, (\mathcal{F}_t)_t,\mathbb{Q})
\end{tikzcd}
\]
\subsection{Lie algebra structure and infinitesimal transformations}

The group of stochastic transformations lacks an obvious intrinsic differentiable structure. To define a Lie algebra, we utilize a geometric approach: we identify the stochastic transformations with a closed subgroup of the diffeomorphism group of a specific principal bundle. This allows the stochastic group to inherit a natural Lie group structure from the geometry of the bundle.

Consider the \emph{trivial principal bundle} \( P = \tilde{M} \times G \). Here, the base manifold \( \tilde{M}:=M\times \mathbb{R}_+ \) represents the spacetime domain (with coordinates \( (x,t) \)), while the \emph{structure group} is defined as \( G = \mathrm{SO}(d) \times \mathbb{R}_+ \times \mathbb{R}^n \). Conceptually, given a stochastic transformation \( T=(\Phi,f,B,h)\), the fiber $ G $ encodes the parameters $(B, f', h)$ that act on the driving Brownian motion, while the base $ \tilde{M} $ correspond to the components $ (\Phi, f) $ that act on the solution process $ X $. The bundle is equipped with the standard projection \(\pi_M: ((x,t),g) \mapsto (x,t)\) and the right action \(R_{g_1}((x,t), g) = ((x,t), g * g_1)\). Here \(*\) denotes the group operation in \(G\) , which is not fixed a priori but must be determined to ensure compatibility with the composition of stochastic transformations.

\begin{definizione}\label{isomorfismo fibrati}  Given two (trivial) principal bundles $ \tilde{M} \times G$ and $  \tilde{M'} \times  G $, an isomorphism between principal bundles is a diffeomorphism $ F:  \tilde{M} \times G \rightarrow \tilde{M'} \times G$ that preserves the principal bundle structure of $ \tilde{M} \times G$ and $ \tilde{M'} \times G$. In other words, there exists a diffeomorphism $\Psi: \tilde{M} \rightarrow \tilde{M'}$ such that 
\begin{equation*}
 \pi_{\tilde{M'}} \circ F = \Psi \circ \pi_{\tilde{M}} \qquad  F \circ R_{g} = R_{g} \circ F \quad \forall g\in G.
\end{equation*} 
\end{definizione}

\begin{osservazione}\label{identificazione}
A bundle isomorphism \(F\) is uniquely determined by its value at the identity section \(((x,t), e)\), where \(e=(\underline{0},1,I_d)\) is the neutral element in \( G\). This means that if \(F((x,t), e) = (\Psi(x,t), \tilde{g}(x,t))\), then \(F((x,t), h) = (\Psi(x,t), \tilde{g}(x,t) * h)\) for all \( h\) in \( G\).
\end{osservazione}

We define a map from the group of stochastic transformations to the group of bundle isomorphisms of \(\tilde{M} \times G\):
\begin{equation}\label{F_T}
T = (\Phi, f, B, h) \longmapsto F_T,
\end{equation}
where, thanks to Remark \ref{identificazione}, \(F_T\) is defined via its action on the identity section:
\[
F_T((x,t), e) := \big( \underbrace{(\Phi(x,t), f(t))}_{\Psi(x,t)},\, (B(x,t), f'(x,t), h(x,t)) \big).
\]
This map is well-defined and bijective: every \(T\) defines a unique \(F_T\), and conversely, any isomorphism \(F\) defines a unique \(T\) by extracting the base diffeomorphism \((\Phi, f)\) and the group components \((B, h)\) from \(F((x,t), e)\).

Since the group of bundle isomorphisms is a closed subgroup of \(\mathrm{Diff}(\tilde M \times G)\) (that is, the group of Diffeomorphisms of \( \tilde{M} \times G\)), it is a Lie group by Cartan's theorem. To transfer this structure to the group of stochastic transformations, \eqref{F_T} must be a group homomorphism. Specifically, we require \(F_{T_2 \circ T_1} = F_{T_2} \circ F_{T_1}\). This condition determines the group law \(*\) on \(G\).
Recalling the composition law for stochastic transformations (Definition~\ref{group structure}), we have
\begin{equation}\label{FT2circT1}
F_{T_2 \circ T_1}((x,t),e) = \Bigl((\Phi_2 \circ \Phi_1, f_2\circ f_1), \; (B_2 \circ \Phi_1) B_1,\; (f'_2 \circ \Phi_1)f'_1,\; \sqrt{f'_1} B_1^{-1}(h_2 \circ \Phi_1) + h_1\Bigr).
\end{equation}
 Conversely, evaluating the composition of isomorphisms \(F_{T_2}(F_{T_1}((x,t), e))\) yields:
\begin{equation}\label{FT2FT1}
F_{T_2}\big( (\Phi_1, f_1), (B_1, f'_1, h_1) \big) = \big( (\Phi_2 \circ \Phi_1, f_2 \circ f_1),\, (B_2 \circ \Phi_1, f'_2 \circ f_1, h_2 \circ \Phi_1) * (B_1, f'_1, h_1) \big).
\end{equation}
Comparing the group components of \eqref{FT2circT1} and \eqref{FT2FT1}, the operation \(*\) must satisfy:
\[
(B_2, f'_2, h_2) * (B_1, f'_1, h_1) = (B_2 B_1,\, f'_2 f'_1,\, \sqrt{f'_1} B_1^{-1} h_2 + h_1).
\]
This structure identifies \(G\) as a semidirect product. Let \(K = SO(d) \times \mathbb{R}_+\) with the component-wise product. We define the homomorphism \(\psi: K \to \mathrm{Aut}(\mathbb{R}^d)\) by \(\psi_{(B, f')}(h) = \sqrt{f'} B^{-1} h\), where \( \mathrm{Aut}(\mathbb{R}^d)\) denotes the group of automorphism of \( \mathbb{R}^d\). Then:
\[
G \cong K \ltimes_{\psi} \mathbb{R}^d
\]
with product law \(\ltimes_{\psi}\) defined in a standard way as
\[
(k_2, h_2) \ltimes_{\psi} (k_1, h_1) = (k_2 k_1, \psi_{k_1}(h_2)+h_1)=\bigl(k_2 k_1,\, \sqrt{f'_1} B_1^{-1} h_2 + h_1\bigr).
\]
With this group law, the map \(T \mapsto F_T\) is a group isomorphism, preserving inverses (\(F_{T^{-1}} = F_T^{-1}\)) and the identity (\( F_{T_0}=Id\)).

\begin{teorema}
The group of stochastic transformations is isomorphic to the group of bundle isomorphisms of \(\tilde{M} \times G\), where \( \tilde{M}=M\times \mathbb{R}_+\) and \(G = (SO(d) \times \mathbb{R}_+) \ltimes_{\psi} \mathbb{R}^d\). Consequently, the group of stochastic transformations inherits a Lie group structure.
\end{teorema}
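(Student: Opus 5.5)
The plan is to verify that the map $T\mapsto F_T$ of \eqref{F_T} is a bijective group homomorphism onto the group of bundle isomorphisms of $\tilde M\times G$, where $G$ carries the semidirect product law $\ltimes_\psi$. The Lie group structure is then transported through this bijection.

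\textbf{Step 1: $G$ is a Lie group.} First I check that $\psi:K\to\mathrm{Aut}(\mathbb{R}^d)$, $\psi_{(B,f')}(h)=\sqrt{f'}B^{-1}h$, is a group homomorphism, so that $K\ltimes_\psi\mathbb{R}^d$ is a group. The homomorphism property to verify is $\psi_{k_2k_1}=\psi_{k_1}\circ\psi_{k_2}$. This matches the law $(k_2,h_2)(k_1,h_1)=(k_2k_1,\psi_{k_1}(h_2)+h_1)$ and is immediate: $\sqrt{f_2'f_1'}(B_2B_1)^{-1}=\sqrt{f_1'}B_1^{-1}\sqrt{f_2'}B_2^{-1}$. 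Associativity, the neutral element $e$ and inverses follow by the standard semidirect product argument. Since $\psi$ is smooth, $G$ is a finite-dimensional Lie group.

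\textbf{Step 2: well-definedness and bijectivity.} Using Remark \ref{identificazione}, I define $F_T((x,t),g):=(\Psi(x,t),\tilde g(x,t)*g)$, with $\Psi=(\Phi,f)$ and $\tilde g=(B,f',h)$.
\begin{itemize}
\item $F_T$ commutes with right actions, by associativity of $*$.
\item $F_T$ covers the diffeomorphism $\Psi$.
\item $F_T$ is a diffeomorphism, with inverse $((y,s),g)\mapsto(\Psi^{-1}(y,s),\tilde g(\Psi^{-1}(y,s))^{-1}*g)$.
\end{itemize}
Conversely, take a bundle isomorphism $F$ covering $\Psi$. I read off $\Phi$ and $\tilde g$ from $F((x,t),e)$. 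For $F$ to correspond to a stochastic transformation, I restrict to those $F$ whose base map has the triangular form $(\Phi(x,t),f(t))$ and whose $\mathbb{R}_+$ fiber component equals $f'$. I would state explicitly that the target group consists of such "admissible" isomorphisms, and check that they form a subgroup. Injectivity is clear, since $F_T$ determines $(\Phi,f,B,h)$.

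\textbf{Step 3: homomorphism.} This is the core computation, and comparing \eqref{FT2circT1} with \eqref{FT2FT1} already carries it out. Both sides agree at the identity section because $*$ was chosen precisely so that the composition law of Definition \ref{group structure} is reproduced. By Remark \ref{identificazione}, agreement on the identity section gives agreement everywhere. Hence $F_{T_2\circ T_1}=F_{T_2}\circ F_{T_1}$. Together with $F_{T_0}=\mathrm{Id}$, this also yields $F_{T^{-1}}=F_T^{-1}$, which is consistent with \eqref{inverso}.

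\textbf{Step 4: Lie structure.} The bundle automorphisms form a closed subgroup of $\mathrm{Diff}(\tilde M\times G)$, since equivariance and fiber preservation are closed conditions. The admissibility constraints are also closed: they say that the time component of the base map is independent of $x$ and that the fiber component equals its derivative. One then invokes Cartan's closed subgroup theorem, as in the paper, and pulls the structure back via the group isomorphism.

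The main obstacle is this last step. Cartan's theorem is a finite-dimensional statement, while $\mathrm{Diff}$ is infinite-dimensional. I would handle it by working in the Fréchet/convenient setting, where the gauge group of a trivial bundle is $C^\infty(\tilde M,G)\rtimes\mathrm{Diff}(\tilde M)$, a known Fréchet Lie group. From there I would either apply a suitable infinite-dimensional closed subgroup result, or exhibit the admissible subgroup directly as a product of known Lie groups: $C^\infty(\tilde M, SO(d)\ltimes\mathbb{R}^d)$ together with the triangular diffeomorphisms. This realization would avoid the need for a general closed subgroup theorem.
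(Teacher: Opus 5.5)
Your Steps 1--3 are the paper's argument, carried out more carefully: you define $F_T$ by its value on the identity section, choose $*$ by comparing $F_{T_2\circ T_1}$ with $F_{T_2}\circ F_{T_1}$, and identify $G$ as a semidirect product. Where you depart from the paper, you are more accurate than it is.

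Your associativity check correctly isolates the condition $\psi_{k_2k_1}=\psi_{k_1}\circ\psi_{k_2}$. So $\psi$ must be a right action (an anti-homomorphism), which $\sqrt{f'}B^{-1}$ is; the paper calls it a homomorphism.

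More substantively, the paper's claim that every bundle isomorphism comes from some $T$ is false. The map $((x,t),g)\mapsto((x,t),(I_d,2,0)*g)$ is a bundle isomorphism covering the identity, yet its $\mathbb{R}_+$-component is not $f'=1$. Likewise, a general diffeomorphism of $\tilde M$ need not have the triangular form $(\Phi(x,t),f(t))$. So the statement is only true with the target replaced by your subgroup of admissible isomorphisms, and your Step 2 is the right fix.

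The genuine gap is Step 4, and the paper does not close it either: it simply invokes Cartan's theorem, which, as you note, is a finite-dimensional result.

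Your first remedy is not available. There is no general closed-subgroup theorem for Fréchet or convenient Lie groups: closed subgroups need not be submanifolds, and closed subspaces need not be complemented.

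Your second remedy is the workable one, but it needs more than you state.
\begin{itemize}
\item It must be a semidirect product, not a product. The kernel $\{\Psi=\mathrm{Id}\}\cong C^\infty(\tilde M,SO(d)\ltimes\mathbb{R}^d)$ carries the pointwise law $(B_2,h_2)(B_1,h_1)=(B_2B_1,B_1^{-1}h_2+h_1)$. The triangular diffeomorphisms act on it by conjugation, $(B,h)\mapsto\bigl(B\circ\Psi^{-1},\sqrt{(f^{-1})'}\,h\circ\Psi^{-1}\bigr)$, and you must check that this action is smooth.
\item You must fix model spaces, because $\tilde M$ is noncompact. With the standard convenient structure on $\mathrm{Diff}$ and on mapping groups (modelled on compactly supported sections, as in Kriegl--Michor), the flows the paper uses later are not smooth curves in the group. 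Examples are the dilations generated by $x\partial_x$ with $m(t)=2t$, and the translations generated by $\beta(t)\partial_x$.
\end{itemize}

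So either choose a Lie structure adapted to such noncompactly supported generators, or weaken the conclusion to what the rest of the paper actually uses. That weaker conclusion is the correspondence between one-parameter groups of admissible isomorphisms and their generators $(Y,m,C,H)$, given by Theorem~\ref{teo ricostruzione del flusso}.
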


We can now define the Lie algebra \(V_d(M)\) associated with this group. Elements of the Lie algebra correspond to tangent vectors at the identity of a one-parameter subgroup \(\{T_\lambda\}_{\lambda}\), computed as derivatives at \(\lambda=0\):
\begin{equation}\label{algebra_di_lie}
\begin{aligned}
Y(x,t) &= \partial_\lambda \Phi_\lambda(x,t)\big|_{\lambda=0}, & \quad m(t) &= \partial_\lambda f_\lambda(t)\big|_{\lambda=0}, \\
C(x,t) &= \partial_\lambda B_\lambda(x,t)\big|_{\lambda=0}, & \quad H(x,t) &= \partial_\lambda h_\lambda(x,t)\big|_{\lambda=0}.
\end{aligned}
\end{equation}

\begin{definizione}\label{def ininitesimal transformation}
An \emph{infinitesimal stochastic transformation} is an element \(V = (Y, m, C, H) \in V_d(M)\), where \(Y=\sum\limits_{i=1}^n Y_i(x,t)\partial_{x_i}\) is a vector field on \(M\), \( m=m(t)\) is a function on \(\mathbb{R}_+\), \(C: M \times \mathbb{R}_+ \to \mathfrak{so}(d)\) is an antysimmetric matrix, and \(H: M \times \mathbb{R}_+\to \mathbb{R}^d\). If \(V = (Y, 0, 0, 0)\), it is called a \emph{strong} infinitesimal transformation.
\end{definizione}

Finally, given an element of the Lie algebra, we can recover the corresponding finite transformation.

\begin{teorema}[Reconstruction of the flow]\label{teo ricostruzione del flusso}
Let \(V = (Y, C, \tau, H)\) be an infinitesimal stochastic transformation. The corresponding one-parameter group \(T_\lambda\) is determined by the unique solution to the system:
\begin{equation}\label{ricostruzione del flusso}
\begin{cases}
    \partial_{\lambda}\Phi_{\lambda}(x,t) = Y(\Phi_{\lambda}(x,t),t), & \Phi_0 = \mathrm{Id}_{\mathbb{R}^n}, \\
    \partial_{\lambda} f_{\lambda}(t) = m(f_{\lambda}(t)), & f_0 = \mathrm{id}_{\mathbb{R}}, \\
    \partial_{\lambda}B_{\lambda}(x,t) = C(\Phi_{\lambda}(x,t),t) B_{\lambda}(x,t), & B_0 = I_d, \\
    \partial_{\lambda}h_{\lambda}(x,t) = \sqrt{f'_{\lambda}(t)}B_{\lambda}^{-1}(x,t) H(\Phi_{\lambda}(x,t),t), & h_0 = 0.
\end{cases}
\end{equation}
\end{teorema}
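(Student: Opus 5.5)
The system should come from differentiating the one-parameter group law $T_{\lambda+\varepsilon}=T_\varepsilon\circ T_\lambda$ with respect to $\varepsilon$ at $\varepsilon=0$, using the composition rule of Definition~\ref{group structure} and the identification of tangent vectors at the identity given in \eqref{algebra_di_lie}. Conversely, any solution of \eqref{ricostruzione del flusso} should be shown to be a one-parameter group, by a uniqueness argument for ODEs in $\lambda$. Throughout, $(x,t)$ are frozen parameters and the only evolution variable is $\lambda$.

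\textbf{Derivation.} Let $\{T_\lambda\}$ be a one-parameter subgroup with generator $V$, and write $T_{\lambda+\varepsilon}=T_\varepsilon\circ T_\lambda$. By Definition~\ref{group structure}, the components are
\[
\Phi_{\lambda+\varepsilon}=\Phi_\varepsilon\circ\Phi_\lambda,\qquad f_{\lambda+\varepsilon}=f_\varepsilon\circ f_\lambda,\qquad B_{\lambda+\varepsilon}=(B_\varepsilon\circ(\Phi_\lambda,\mathrm{id}))\,B_\lambda,
\]
\[
h_{\lambda+\varepsilon}=h_\lambda+\sqrt{f'_\lambda}\,B_\lambda^{-1}\,(h_\varepsilon\circ(\Phi_\lambda,\mathrm{id})).
\]
I differentiate each identity in $\varepsilon$ at $\varepsilon=0$. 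Since $\Phi_0=\mathrm{Id}$, $f_0=\mathrm{id}$, $B_0=I_d$ and $h_0=0$, and the derivatives at $0$ are $Y,m,C,H$, the four lines of \eqref{ricostruzione del flusso} follow at once. One point needs care: in the last two lines the spatial argument is $\Phi_\lambda(x,t)$ while the time argument stays $t$. This is exactly what $\circ(\Phi_\lambda,\mathrm{id})$ encodes, since $\Phi$ acts only on space. The initial conditions are the neutral element \eqref{neutro}. This proves necessity.

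\textbf{Existence and uniqueness.} The system is triangular. The first equation is the flow ODE of the time-dependent vector field $Y(\cdot,t)$ with $t$ frozen. The second is the flow of $m$ on $\mathbb{R}_+$. Once $\Phi_\lambda$ is known, the third is a linear matrix ODE in $B_\lambda$ with smooth coefficient $C(\Phi_\lambda,t)$. Because $C$ is antisymmetric, $\partial_\lambda(B_\lambda^TB_\lambda)=B_\lambda^T(C^T+C)B_\lambda=0$, so $B_\lambda\in SO(d)$ (the determinant stays $1$ by continuity). Once $f_\lambda$ and $B_\lambda$ are known, the fourth equation is solved by direct integration in $\lambda$. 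Picard--Lindelöf on the maximal interval, assuming completeness of the vector fields $Y$ and $m$ (otherwise the statement is local in $\lambda$), gives a unique smooth solution. Smooth dependence on parameters ensures $(\Phi_\lambda,f_\lambda,B_\lambda,h_\lambda)$ is a stochastic transformation, with $f'_\lambda>0$ because $f_\lambda$ is a flow diffeomorphism of $\mathbb{R}_+$.

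\textbf{Group property (main obstacle).} The hard step is to show the solution really is a one-parameter group, $T_{\lambda+\varepsilon}=T_\varepsilon\circ T_\lambda$. I fix $\lambda$ and regard both sides as functions of $\varepsilon$. They agree at $\varepsilon=0$, and I will check that both satisfy the same triangular ODE system in $\varepsilon$, so uniqueness forces them to coincide.

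For $\Phi$ and $f$ this is the classical flow property. For $B$, the right side $\tilde B_\varepsilon=(B_\varepsilon\circ\Phi_\lambda)B_\lambda$ satisfies
\[
\partial_\varepsilon\tilde B_\varepsilon=C(\Phi_\varepsilon\circ\Phi_\lambda,t)\,\tilde B_\varepsilon=C(\Phi_{\lambda+\varepsilon},t)\,\tilde B_\varepsilon,
\]
which is the equation satisfied by $B_{\lambda+\varepsilon}$.

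The $h$-component is the delicate one. I need
\[
\partial_\varepsilon\Bigl[\sqrt{f'_\lambda}\,B_\lambda^{-1}\,h_\varepsilon(\Phi_\lambda)\Bigr]=\sqrt{f'_\lambda}\,B_\lambda^{-1}\sqrt{f'_\varepsilon\circ f_\lambda}\;(B_\varepsilon\circ\Phi_\lambda)^{-1}H(\Phi_{\lambda+\varepsilon})
\]
to equal $\sqrt{f'_{\lambda+\varepsilon}}\,B_{\lambda+\varepsilon}^{-1}H(\Phi_{\lambda+\varepsilon})$. This uses the already-proved identities
\[
B_{\lambda+\varepsilon}^{-1}=B_\lambda^{-1}(B_\varepsilon\circ\Phi_\lambda)^{-1},\qquad f'_{\lambda+\varepsilon}=(f'_\varepsilon\circ f_\lambda)\,f'_\lambda,
\]
the latter being the chain rule. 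A scalar factor commutes with the matrices, so the two expressions agree. I also need to make sure the time arguments of $f'$ match the convention of Definition~\ref{group structure}; I expect this bookkeeping to be where care is required.
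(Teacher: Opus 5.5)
Your overall strategy is the natural one: differentiate $T_{\lambda+\varepsilon}=T_\varepsilon\circ T_\lambda$ for necessity, and use ODE uniqueness for the group property. The paper gives no argument of its own and only cites \cite{paper2020}. However, the bookkeeping you postponed in the $h$-step is a genuine gap, and under the convention you adopted it fails.

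You read $\circ(\Phi_\lambda,\mathrm{id})$ literally, so $h_\varepsilon$ is evaluated at $(\Phi_\lambda(x,t),t)$. The fourth line of \eqref{ricostruzione del flusso} then produces the factor $\sqrt{f'_\varepsilon(t)}$, not the $\sqrt{f'_\varepsilon(f_\lambda(t))}$ you wrote. With $\sqrt{f'_\varepsilon(t)}$, the chain rule $f'_{\lambda+\varepsilon}=(f'_\varepsilon\circ f_\lambda)f'_\lambda$ does not close the argument. This is not cosmetic. Take $n=d=1$, $Y=0$, $C=0$, $H=1$, $m(t)=t^2$. Then $f_\lambda(t)=t/(1-\lambda t)$, $\sqrt{f'_\lambda(t)}=1/(1-\lambda t)$ and $h_\lambda(t)=-t^{-1}\ln(1-\lambda t)$. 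At $t=1$, $\lambda=\varepsilon=\tfrac14$:
\[
h_{1/2}(1)=\ln 2\neq \tfrac{7}{3}\ln\tfrac{4}{3}=h_{1/4}(1)+\sqrt{f'_{1/4}(1)}\,h_{1/4}(1).
\]
In fact, with time argument $t$, the composition law of Definition~\ref{group structure} is not even associative in the $h$-component. One bracketing carries $\sqrt{f_2'(f_1(t))f_1'(t)}$ in front of $h_3$, the other $\sqrt{f_2'(t)f_1'(t)}$. So no uniqueness argument can rescue the literal reading.

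The fix is to take the composition law from the process level, as in Section~\ref{probabilistic foundation of the algebraic structure}. There $P_{T_2}(P_{T_1}(X))_t=\Phi_2(\Phi_1(X_s,s),f_1(s))$ with $s=(f_2\circ f_1)^{-1}(t)$. The substitution $u=f_1(r)$ in the stochastic integral defining $P_{T_2}(P_{T_1}(W))$ evaluates $B_2$, $h_2$ and $f_2'$ at $(\Phi_1(x,t),f_1(t))$. So $\circ(\Phi_1,\mathrm{id})$ must be read as composition with the base map $(\Phi_1,f_1)$. Differentiating $T_{\lambda+\varepsilon}=T_\varepsilon\circ T_\lambda$ then gives \eqref{ricostruzione del flusso} with $Y$, $C$, $H$ evaluated at $(\Phi_\lambda(x,t),f_\lambda(t))$. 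This is what the paper actually uses for $V_\alpha$ in Section~\ref{Symmetries of BM}, where $\partial_\lambda\Phi_\lambda=\tfrac12\alpha'(f_\lambda(t))\Phi_\lambda$.

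With that reading your proof goes through after two adjustments:
\begin{enumerate}
\item The first two equations together are the flow of the autonomous space-time field $Y+m\partial_t$ on $M\times\mathbb{R}_+$. The flow property of $\Phi$ comes from that flow, not from freezing $t$, and the triangular order becomes $f$, then $\Phi$, then $B$, then $h$.
\item In the $B$- and $h$-checks you obtain $C(\Phi_{\lambda+\varepsilon},f_{\lambda+\varepsilon})$, $B_\varepsilon^{-1}(\Phi_\lambda,f_\lambda)$ and $H(\Phi_{\lambda+\varepsilon},f_{\lambda+\varepsilon})$. The factor $\sqrt{f'_\varepsilon(f_\lambda(t))}$ now arises legitimately, so your chain-rule step closes exactly as written.
\end{enumerate}
When $m\equiv 0$ the two readings coincide, and your argument is already complete.
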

\begin{proof}
See \cite{paper2020}.
\end{proof}

\begin{osservazione}
    Following the usual one-parameter group notations, we will denote by \( T_{-\lambda}\) the inverse of the transformation \( T_\lambda\), that is, \( T_{-\lambda}=T_\lambda^{-1}\). In the same way, \( \Phi_{-\lambda}\equiv \Phi_\lambda^{-1}\), \( f_{-\lambda}\equiv f_\lambda^{-1}\), \( B_{-\lambda}\equiv B_\lambda^{-1}\) and \( h_{-\lambda}=h_\lambda^{-1}.\)
\end{osservazione}

\subsubsection{Time invariance of Brownian motion}\label{Example time invariance BM}

Consider a \( n-\)dimensional Brownian motion \( d\underline{X}_t=d\underline{W}_t\), with \( \underline{X}=(X^1,...,X^n)^T, \underline{W}=(W^1,...,W^n)^T,\) drift \( \mu_0:=\underline{0}\) and diffusion coefficient \( \sigma_0:=I_n\). Let us apply to this SDE the infinitesimal stochastic transformation \( V=(Y,m, C, H)\), defined by:
\[ 
Y = \sum_{i=1}^n x_i \partial_{x_i}, \quad m(t)= 2t, \quad C = \underline{0}_{n\times n},  \quad H = \underline{0}_n.
\]
We aim to recover the corresponding one-parameter group of finite transformations. By the flow reconstruction theorem (Theorem \ref{teo ricostruzione del flusso}), the components of \(T_\lambda\) satisfy:
\[ 
\partial_\lambda \Phi_\lambda(x,t) = \Phi_\lambda(x,t), \quad \partial_\lambda f_\lambda(t) = 2f_\lambda(t), \quad \partial_\lambda B_\lambda = \underline{0}, \quad \partial_\lambda h_\lambda = 0.
\]
The last two equations imply that \( B_\lambda\) and \( h_\lambda\) are independent of \( \lambda\). The first two equations can be solved by separation of variables:
\[ 
\frac{d\Phi_\lambda}{\Phi_\lambda} = d\lambda, \qquad \frac{df_\lambda}{f_\lambda} = 2d\lambda.
\]
Imposing the initial conditions \( \Phi_0=\mathrm{Id}, f_0=\mathrm{id}, B_0=I_{n}, h_0=\underline{0}_n\), we obtain the group \( T_\lambda=(\Phi_\lambda, f_\lambda, B_\lambda, h_\lambda) \):
\[ 
\Phi_\lambda(x) = e^{\lambda}x, \quad f_\lambda(t) = e^{2\lambda} t, \quad B_\lambda = I_n, \quad h_\lambda = \underline{0}_n
\]
with \( \lambda\) ranging in \(\mathbb{R}\).
Setting \( a = e^\lambda \) (with \( a > 0 \)), the transformation \( T := T_{\ln(a)} \) takes the form:
\[
\Phi(x) = a x, \quad f(t) = a^2 t, \quad B = I_n, \quad h = \underline{0}_n.
\]
To verify the action of \( T\) on the original Brownian motion SDE, we apply Theorem \ref{trasformate}. Since \( f'(t) = \partial_t f(t) \) and \( f^{-1}(t)=\frac{t}{a^2}\), the transformed process is given by the pair:
\[ 
P_T(X)_t = \Phi(X_{f^{-1}(t)}) = a X_{t/a^2}, \qquad 
P_T(W)_t = \int_0^{f^{-1}(t)}\sqrt{f'(s)}\,dW_s = \int_0^{t/a^2} a\,dW_s = a W_{t/a^2}.
\]
On the other hand, the transformed coefficients are 
\begin{align*}
E_T(\mu_0) &= \Big[\frac{1}{f'}\big(L_t(\Phi)+D(\Phi)\sigma h\big)\Big]\circ(\Phi^{-1},f^{-1}) = \Big[\frac{1}{a^2}\big(0 + 0\big)\Big] = \underline{0}_n=\mu_0, \\
E_T(\sigma_0) &= \Big(\frac{1}{\sqrt{f'}}D(\Phi)\sigma B^{-1}\Big)\circ(\Phi^{-1},f^{-1}) = \Big(\frac{1}{a}\, a I_n \cdot I_n\Big) = I_n=\sigma_0.
\end{align*}
Since \( h=\underline{0}_n \), the probability measure remains unchanged (\( d\mathbb{Q}/d\mathbb{P}=1\)). However, the filtration is time-scaled (Theorem \ref{trasformate}):
\[ 
\mathcal{F}'_{t} = \mathcal{F}_{f^{-1}(t)} = \mathcal{F}_{t/a^2}.
\]
Since \( E_T(\mu_0)=\mu_0\) and \( E_T(\sigma_0)=\sigma_0\), \( T\) is a weak symmetry according to Definition \ref{def symmetries}. The transformed process \( P_T(X,W)\) satisfies the original Brownian motion SDE with respect to the transformed filtration \( (\mathcal{F}_{t/a^2})_t \). This recovers the well-known self-similarity (or time invariance) property of Brownian motion (see Proposition 3.2 in \cite{baldi}): if \( W_t\) is a \( \mathcal{F}_t-\)Brownian motion, then \( a W_{\frac{t}{a^2}}\) is a \( \mathcal{F}_{\frac{t}{a^2}}-\)Brownian motion.

 \subsection{Infinitesimal symmetries}

\begin{definizione}
A stochastic infinitesimal transformation \(V\) generating a one-parameter group \(T_{\lambda}\) is called an infinitesimal symmetry---respectively, strong, weak, or \(\mathcal{G}\)-weak---for  \(SDE_{\mu,\sigma}\) if \(T_{\lambda}\) is a finite symmetry of the corresponding type for \(SDE_{\mu,\sigma}\).
\end{definizione}

The next theorem provides a useful characterization of infinitesimal symmetries, yielding the \emph{infinitesimal determining equations}. These constitute the infinitesimal counterparts of the equations in Theorem~\ref{equazioni determinanti finite}, but are far simpler to solve and therefore form an effective tool for the explicit computation of symmetries.

\begin{teorema}\label{equazioni determinanti}
Let \(V = (Y,0,0,0)\) be a strong infinitesimal transformation. Then \(V\) is a strong infinitesimal symmetry if and only if \(V\) generates a one-parameter group of transformations and the following conditions hold:
\[
Y(\mu) - L_t(Y) = 0, 
\qquad 
[Y,\sigma] = 0,
\]
where
\[
[Y,\sigma]_{i \alpha}
=
Y(\sigma_{i\alpha})
-
\sum\limits_{k=1}^n\partial_{k}(Y^{i})\,\sigma^{k}_{\alpha}
=
\sum\limits_{k=1}^n\big(Y^{k}\partial_{k}(\sigma^{i}_{\alpha})
-
\partial_{k}(Y^{i})\,\sigma^{k}_{\alpha}\big).
\]

Let \(V = (Y, m, C, H)\) be a weak infinitesimal transformation. Then \(V\) is a weak infinitesimal symmetry of \(SDE_{\mu,\sigma}\) if and only if \(V\) generates a one-parameter group of transformations and the following conditions hold:
\begin{equation}\label{weak inf}
Y(\mu)+m(t)\partial_t \mu - L_t(Y) - \sigma H + m'(t) \mu = 0,
\qquad
[Y,\sigma] + \tfrac{1}{2}m'(t) \sigma + \sigma C = 0.
\end{equation}

Let \(V = (Y,m,C,H)\) be a weak infinitesimal transformation. Then \(V\) is a \(\mathcal{G}\)-weak infinitesimal symmetry of \(SDE_{\mu,\sigma}\) if and only if \(V\) generates a one-parameter group of transformations  and the following determining equations hold:
\begin{equation}\label{g weak inf}
Y(\mu) + m(t)\partial_t \mu - L_t(Y) - \sigma H + m'(t) \mu = 0,
\qquad
[Y,\sigma\sigma^{T}] + \tau\, \sigma\sigma^{T} = 0,
\end{equation}
where
\[
[Y,\sigma\sigma^{T}]
=
Y(\sigma\sigma^{T})
-
D(Y)\,\sigma\sigma^{T}
-
\sigma\sigma^{T}\, D(Y)^{T}.
\]
\end{teorema}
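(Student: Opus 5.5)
The plan is to differentiate the finite determining equations of Theorem \ref{equazioni determinanti finite} along the one-parameter group $T_\lambda$ generated by $V$, using the flow equations of Theorem \ref{teo ricostruzione del flusso}. I will treat the weak case in detail. The strong case is the specialization $m=0$, $C=0$, $H=0$. The $\mathcal{G}$-weak case only replaces the $\sigma$-equation by the $\sigma\sigma^T$-equation, with $\tau=m'$.

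For the weak case I write the finite conditions as $E_{T_\lambda}(\mu)=\mu$ and $E_{T_\lambda}(\sigma)=\sigma$ for all $\lambda$. For necessity, I differentiate these identities at $\lambda=0$. From \eqref{algebra_di_lie} I have $\Phi_\lambda=\mathrm{Id}+\lambda Y+o(\lambda)$, $f_\lambda=\mathrm{id}+\lambda m+o(\lambda)$, $f'_\lambda=1+\lambda m'+o(\lambda)$, $B_\lambda=I+\lambda C+o(\lambda)$ and $h_\lambda=\lambda H+o(\lambda)$. Consequently $\Phi_\lambda^{-1}=\mathrm{Id}-\lambda Y+o(\lambda)$ and $f_\lambda^{-1}=\mathrm{id}-\lambda m+o(\lambda)$.

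Substituting into $E_{T_\lambda}(\sigma)$ from Theorem \ref{trasformate} gives, to first order,
\[
(1-\tfrac{\lambda}{2}m')(I+\lambda DY)\sigma(I-\lambda C)
\]
evaluated at $(x-\lambda Y,\,t-\lambda m)$. Its $\lambda$-derivative is $-\tfrac12 m'\sigma+DY\,\sigma-\sigma C-Y(\sigma)-m\partial_t\sigma$.

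A similar expansion of $E_{T_\lambda}(\mu)$ uses $L_t(\Phi_\lambda)=\mu+\lambda L_t(Y)+o(\lambda)$ and $D\Phi_\lambda\,\sigma h_\lambda=\lambda\sigma H+o(\lambda)$. Its derivative is $-m'\mu+L_t(Y)+\sigma H-Y(\mu)-m\partial_t\mu$. Setting both derivatives to zero gives \eqref{weak inf}, up to the $\partial_t\sigma$ term, which I must reconcile with the paper's convention for $[Y,\sigma]$ (time dependence of coefficients).

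For sufficiency, I use the group property. Since $T_{\lambda+\varepsilon}=T_\varepsilon\circ T_\lambda$ and $E_{T_2\circ T_1}=E_{T_2}\circ E_{T_1}$ (Section \ref{probabilistic foundation of the algebraic structure}), the map $\lambda\mapsto(\mu_\lambda,\sigma_\lambda):=E_{T_\lambda}(\mu,\sigma)$ satisfies a first-order evolution equation. Explicitly, $\partial_\lambda(\mu_\lambda,\sigma_\lambda)=\mathcal{A}_V(\mu_\lambda,\sigma_\lambda)$, where $\mathcal{A}_V$ is the linearized operator just computed. The determining equations say exactly that $(\mu,\sigma)$ is a stationary point of this flow, $\mathcal{A}_V(\mu,\sigma)=0$. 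Uniqueness of solutions to this evolution equation then gives $(\mu_\lambda,\sigma_\lambda)\equiv(\mu,\sigma)$ for all $\lambda$. Alternatively, I can view it as a transport equation along the characteristics given by the flow of $(Y,m)$, which is available because $V$ generates a one-parameter group. Theorem \ref{equazioni determinanti finite} then concludes that each $T_\lambda$ is a symmetry.

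The $\mathcal{G}$-weak case runs identically with $\sigma_\lambda\sigma_\lambda^T$ in place of $\sigma_\lambda$. Here the rotation drops out since $B_\lambda B_\lambda^T=I$, and the derivative yields $[Y,\sigma\sigma^T]+m'\sigma\sigma^T=0$.

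I expect the main obstacle to be the sufficiency step. The generator $\mathcal{A}_V$ is nonlinear in $\sigma$ through the $L_t(Y)$ term, since the second-order part of $L_t$ involves $\sigma\sigma^T$. So rather than a bare appeal to uniqueness, I need to justify that $E_{T_\lambda}(\mu,\sigma)$ is differentiable in $\lambda$ and that the stationary solution is the unique one; I would try the characteristics argument first. A secondary, bookkeeping difficulty is the correct handling of the composition $\circ(\Phi^{-1},f^{-1})$ when differentiating.
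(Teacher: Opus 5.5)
\textbf{The gap is in the $\mathcal{G}$-weak case.} Your claim that this case ``runs identically'' fails. The rotation drops out of the $\sigma\sigma^T$-equation, but not out of the drift. $E_{T_\lambda}(\mu)$ contains $D\Phi_\lambda\,\sigma\,h_\lambda$, which is not a function of $\sigma\sigma^T$, and $h_\lambda$ is generated through $B_\lambda^{-1}$ in Theorem \ref{teo ricostruzione del flusso}.

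Write $\mathcal{A}_V(\mu,\sigma)=(\delta\mu,\delta\sigma)$ for the first-order variation you computed. The $\mathcal{G}$-weak equations say $\delta\mu=0$ and $\delta\sigma\,\sigma^T+\sigma\,\delta\sigma^T=0$, but in general $\delta\sigma\neq0$. Differentiating $E_{T_{\lambda+\varepsilon}}=E_{T_\lambda}\circ E_{T_\varepsilon}$ in $\varepsilon$ at $0$ gives
\[
\partial_\lambda E_{T_\lambda}(\mu)=\Big[\tfrac{1}{f'_\lambda}\,D\Phi_\lambda\,\delta\sigma\,h_\lambda\Big]\circ(\Phi_\lambda^{-1},f_\lambda^{-1}).
\]
This is $O(\lambda)$ but not zero, so stationarity at $\lambda=0$ does not propagate along the flow.

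In fact, with the paper's definitions, the ``if'' direction of the $\mathcal{G}$-weak part is false. Take two-dimensional Brownian motion and \eqref{Vbeta 2d} with $C$ replaced by $0$. Neither equation in \eqref{g weak inf} involves $C$, so both still hold, and the flow is global:
\[
\Phi_\lambda=R(\lambda\beta(t))x,\qquad B_\lambda=I_2,\qquad h_\lambda=\beta'(t)J\int_0^\lambda R(s\beta(t))\,ds\,x .
\]
Here $R(\theta)$ is the rotation matrix appearing as $B_\lambda$ in the two-dimensional example, and $J=\begin{pmatrix}0&-1\\1&0\end{pmatrix}$. Then
\[
E_{T_\lambda}(\mu)(y,t)=\beta'(t)\,J\Big(\int_0^\lambda R(s\beta(t))\,ds-\lambda I_2\Big)y=\tfrac{\lambda^2}{2}\,\beta(t)\beta'(t)\,y+O(\lambda^3),
\]
which is nonzero whenever $\beta\beta'\neq0$. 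With the $C$ of \eqref{Vbeta 2d} one instead gets $h_\lambda=\lambda\beta'(t)Jx$, and the drift vanishes.

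So in the $\mathcal{G}$-weak case you can only prove necessity. The display above shows that the exact missing condition is $\delta\sigma\,h_\lambda\equiv0$ for all $\lambda$. This holds, for instance, when $C$ satisfies the weak $\sigma$-equation ($\delta\sigma=0$), which reduces the case to the weak one.

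\textbf{The rest is sound} and follows the paper's route: the cited proof is only ``differentiate the finite determining equations in $\lambda$''.

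Your $m\,\partial_t\sigma$ term is genuinely there. The bracket $[Y,\sigma]$ as defined contains no time derivative, so no convention absorbs it. The printed $\sigma$- and $\sigma\sigma^T$-equations are therefore correct only when $m\,\partial_t\sigma=0$, which is the case for the constant $\sigma$ in all the Brownian examples. Likewise $\tau$ must be read as $m'$.

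Your worry about nonlinearity in the strong and weak sufficiency step is unfounded. Let $DE_{T_\lambda}$ denote the derivative of $(\tilde\mu,\tilde\sigma)\mapsto E_{T_\lambda}(\tilde\mu,\tilde\sigma)$. Since $E_{T_\lambda}$ acts pointwise and algebraically on the coefficients and then composes with a fixed diffeomorphism, the group law gives $\partial_\lambda E_{T_\lambda}(\mu,\sigma)=DE_{T_\lambda}[\mathcal{A}_V(\mu,\sigma)]$. This vanishes identically once $\mathcal{A}_V(\mu,\sigma)=0$, so you need no uniqueness theorem or characteristics argument. Alternatively, your evolution system is triangular: the $\sigma$-equation is linear and closed, so uniqueness reduces to linear ODE uniqueness along the spacetime flow $(\Phi_\lambda,f_\lambda)$.
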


\begin{proof}
See \cite{papernuovo}. The main idea consists of differentiating the corresponding finite determining equations with respect to the flow parameter.
\end{proof}

Now that we have established a Lie algebra structure, our strategy to study the invariance properties of a given diffusion is the following: instead of solving the finite determining equations given in Theorem \ref{equazioni determinanti finite}, which are generally difficult to handle, we first solve the infinitesimal determining equations provided in Theorem \ref{equazioni determinanti}, and then apply Theorem \ref{teo ricostruzione del flusso} to recover the corresponding finite symmetries.

\subsection{Symmetries of Brownian Motion}\label{Symmetries of BM}

We aim to recover the symmetries of an $n$-dimensional Brownian motion described by the SDE $d\mathbf{X}_t = d\mathbf{W}_t$, where $\mathbf{X} = (X^1, \dots, X^n)^T$ and $\mathbf{W} = (W^1, \dots, W^n)^T$, with drift $\mu = \mathbf{0}$ and diffusion coefficient $\sigma = I_n$.

Our approach proceeds in two steps: first, we solve the infinitesimal determining equations to identify the infinitesimal symmetries; second, we reconstruct the associated flow to derive the corresponding group of finite symmetries.

According to Theorem \ref{equazioni determinanti}, a weak infinitesimal symmetry is defined by the quadruple $V = (Y, m, C, H)$, where:
\[
Y = \sum_{i=1}^n Y^{i}(\mathbf{x}, t) \partial_{x_i}, \quad m = m(t), \quad C = [c_{ij}(\mathbf{x},t)]_{i,j=1,\dots,n}, \quad H = \begin{pmatrix} H^1(\mathbf{x}, t) \\ \vdots \\ H^n(\mathbf{x}, t) \end{pmatrix}.
\]
The infinitesimal generator must satisfy the determining equations \eqref{weak inf}. Given $\mu = \underline{0}$ and $\sigma = I_n$, the first determining equation, $L_t(Y^{i}) - Y(\mu^{i})-m(t)\partial_t(\mu^i) + (\sigma H)^{i} - m'(t) \mu^{i} = 0$, reduces to:
\begin{equation}\label{eqdet1 mb}
H^{i}(\mathbf{x}, t) = -\partial_t Y^{i}(\mathbf{x}, t) - \frac{1}{2} \Delta Y^{i}(\mathbf{x}, t).
\end{equation}
Since for every \(k,i,\alpha\) we have \( \partial_k \sigma_{i \alpha}=0\), the second determining equation, $[Y, \sigma] + \frac{1}{2}m'(t) \sigma + \sigma C = 0$, simplifies to $-D(Y) + \frac{1}{2}m'(t) I_n + C = 0$. In component form, this is expressed as:
\begin{equation}\label{eqdet2 mb}
- \frac{\partial Y^i}{\partial x_j} + \frac{1}{2} m'(t) \delta_{ij} + c_{ij} = 0.
\end{equation}
Combining \eqref{eqdet1 mb} and \eqref{eqdet2 mb}, we obtain the complete system of determining equations:
\begin{equation}\label{eqdet tot mb}
\begin{cases}
H^{i}(\mathbf{x}, t) = -\partial_t Y^{i}(\mathbf{x}, t) - \dfrac{1}{2} \Delta Y^{i}(\mathbf{x}, t), & \forall i = 1, \dots, n, \\[0.8em]
\partial_{x_i} Y^{i}(\mathbf{x}, t) = \dfrac{1}{2} m'(t), & \forall i = 1, \dots, n, \\[0.8em]
\partial_{x_j} Y^{i}(\mathbf{x}, t) = c_{ij}(\mathbf{x},t), & \forall i \neq j.
\end{cases}
\end{equation}
This system depends on two sets of free time-dependent parameters, $m(t)$ and $C(t)$, which generate two independent families of infinitesimal symmetries.

\paragraph{First Family ($V_\alpha$)}
By setting $C = 0$ and $m = \alpha(t)$, where $\alpha(t)$ is an arbitrary smooth function, we obtain the infinitesimal symmetry:

\begin{equation}\label{Valpha nd}
V_{\alpha} = \left(  \sum\limits_{i=1}^n\frac{1}{2}\alpha'(t) x_i \partial_{x_i} , \ \alpha(t),  \ \mathbf{0}, \  - \frac{1}{2} \alpha''(t) \mathbf x \right).
\end{equation}

\paragraph{Second Family ($V_\beta$)}
If we set $m = 0$, the determining system \eqref{eqdet tot mb} implies that the spatial part of the symmetry is governed by the antisymmetric matrix $C(t) = (c_{ij}(t)) \in \mathfrak{so}(n)$. Solving the equations\footnote{Recall that since \( C\) is by construction antisymmetric then \(c_{ij}=-c_{ji} \forall i\not=j\) and \( c_{ii}=0 \forall i\). }, we find:

\begin{equation}\label{Vbeta nd}
V_{\beta} = \left( \Big(\sum\limits_{i,j=1}^n C_{\beta(t)_{i,j}} x_j\partial_{x_i}\Big) , \ 0, \ C_{\beta(t)}, -C'_{\beta(t)} \mathbf{x} \right),
\end{equation}
where $C_{\beta(t)}$ is a smooth antisymmetric matrix parametrized by the vector of functions $\beta(t)$. 

In the one-dimensional case ($n=1$), rotations are trivial ($C=0$), and the spatial part $Y$ becomes an arbitrary function of time $\beta(t)$. Thus, $V_\beta$ reduces to:

\begin{equation}\label{Vbeta 1d}
V_{\beta} = \left(  \beta(t) \partial_x,\ 0, \ 0, -\beta'(t) \right).
\end{equation}
In the two-dimensional case ($n=2$), $V_\beta$ explicitly accounts for rotations:

\begin{equation}\label{Vbeta 2d}
V_{\beta} = \left(  \beta(t)(x_2 \partial_{x_1} - x_1 \partial_{x_2}), \ 0, \ \begin{pmatrix} 0 & \beta(t) \\ -\beta(t) & 0 \end{pmatrix}, \begin{pmatrix} -x_2\beta'(t) \\ x_1\beta'(t) \end{pmatrix} \right).
\end{equation}
We now proceed to reconstruct the flow and recover the corresponding finite symmetries. While the study of $T_\lambda$ for $V_\alpha$ is conducted in arbitrary dimension $n$, for $V_\beta$ we focus on $n=1$ and $n=2$ to illustrate the distinction between translational and rotational invariance. The results for $n \geq 3$ generalize directly from the two-dimensional case.

\subsubsection*{The family $V_{\alpha}$ and time invariance}

Consider an $n$-dimensional Brownian motion $d\mathbf{X}_t = d\mathbf{W}_t$, where $\mathbf{X} = (X^1, \dots, X^n)^T$ and $\mathbf{W} = (W^1, \dots, W^n)^T$. Consider the family of infinitesimal symmetries $V_{\alpha}$ computed in \eqref{Valpha nd}
\[
V_{\alpha} = \left(\sum\limits_{i=1}^n\frac{1}{2} \alpha'(t) x_i\partial_{x_i}, \ \alpha(t), \ 0, \ -\frac{1}{2} \alpha''(t) \mathbf{x} \right).
\]
This family is related to the time-invariance properties of Brownian motion. It is well known (see \cite{oksendal}) that if $\mathbf{W}_t$ is a Brownian motion on a filtered probability space $(\Omega, \mathcal{F}, (\mathcal{F}_t), \mathbb{P})$ and $f(t) = \int_0^t f'(s) ds$ is a time change, then the process $\bar{\mathbf{W}}_t := \int_0^t \sqrt{f'(f^{-1}(s))} d\mathbf{W}_s$ is a $(\Omega, \mathcal{F}, (\mathcal{F}_{f^{-1}(t)}), \mathbb{P})$-Brownian motion. The family $V_\alpha$ generalizes this by considering, instead of \(\bar{W}\), the process $\tilde{\mathbf{W}}_t := \sqrt{f'(f^{-1}(t))} \mathbf{W}_{f^{-1}(t)}$. While $\tilde{\mathbf{W}}$ is not a $\mathbb{P}$-Brownian motion, Itô’s calculus and Girsanov's theorem show it is a $(\Omega, \mathcal{F}, (\mathcal{F}_{f^{-1}(t)}), \mathbb{Q})$-Brownian motion, where $d\mathbb{Q}/d\mathbb{P}$ is the Doléans-Dade exponential of the drift $h := \frac{f''(f^{-1}(t))}{2 [f'(f^{-1}(t))]^{3/2}}$, since:
\[
d\tilde{\mathbf{W}}_t = \underbrace{\sqrt{f'(f^{-1}(t))} \, d\mathbf{W}_{f^{-1}(t)}}_{d\bar{\mathbf{W}}_t} - \underbrace{\frac{f''(f^{-1}(t))}{2 [f'(f^{-1}(t))]^{3/2}}  \mathbf{W}_{f^{-1}(t)}  dt}_{\mathbf{h} \,  dt}.
\]
According to the flow reconstruction (Theorem \ref{teo ricostruzione del flusso}), $V_\alpha$ generates a one-parameter group of symmetries $T_\lambda = (\Phi_\lambda, f_\lambda, B_\lambda,  h_\lambda)$ satisfying:
\[
\partial_\lambda f_\lambda(t) = \alpha(f_\lambda(t)), \quad \partial_\lambda \Phi_\lambda = \frac{1}{2} \alpha'(f_\lambda(t)) \Phi_\lambda, \quad \partial_\lambda B_\lambda=0, \quad \partial_\lambda h_\lambda = -\frac{1}{2} \sqrt{f'_\lambda} \alpha''(f_\lambda(t)) \Phi_\lambda.
\]
Integrating with respect to $\lambda$ with initial conditions $\Phi_0(\mathbf{x}, t) = \mathbf{x}$, $f_0(t) = t$, \( B_0(\mathbf{x},t)=I_n\) and $h_0(\mathbf{x},t) = \mathbf{0}$, and applying the chain rule $\alpha'(f_\lambda(t)) = \frac{\partial_\lambda \alpha(f_\lambda(t))}{\alpha(f_\lambda(t))}$, we obtain:
\begin{align*}
\Phi_\lambda(\mathbf{x},t) &= \mathbf{x} \sqrt{|\alpha(f_\lambda(t))|}, \quad f'_\lambda(t) = |\alpha(f_\lambda(t))|, \quad B_\lambda=I_n \\
\mathbf{h}_\lambda(\mathbf{x},t) &= -\frac{1}{2} \mathbf{x} \int \alpha''(f_\lambda(t)) |\alpha(f_\lambda(t))| d\lambda = -\frac{1}{2} \mathbf{x} \, \mathrm{sgn}(\alpha(f_\lambda(t))) \alpha'(f_\lambda(t)).
\end{align*}
By Theorem \ref{trasformate}, the transformed pair $(P_{T_\lambda}(\mathbf{X}), P_{T_\lambda}(\mathbf{W}))$ solves the same SDE as $(\mathbf{X}, \mathbf{W})$. Specifically, the transformed processes are:
\[ 
P_{T_\lambda}(\mathbf{X}_t) = \Phi_\lambda(X_{f_{-\lambda}(t)}, f_{-\lambda}(t))=\sqrt{|\alpha(t)|} \mathbf{X}_{f_{-\lambda}(t)},\]
\[\quad P_{T_\lambda}(\mathbf{W}_t) = \int_0^{f_{-\lambda}(t)}\sqrt{f'_\lambda(t)}\big(d\mathbf{W}_t-h(\mathbf{X}_s,s)ds\big)=\int_0^t \sqrt{|\alpha(s)|} \left( d\mathbf{W}_{f_{-\lambda}(s)} + \frac{1}{2} \mathbf{X}_{f_{-\lambda}(s)} \frac{\mathrm{sgn}(\alpha(s)) \alpha'(s)}{|\alpha(s)|} ds \right).
\]
Consistent with the original equation $d\mathbf{X}_t = d\mathbf{W}_t$, Itô's calculus and Girsanov's theorem confirm that $P_{T_\lambda}(\mathbf{X}, \mathbf{W})$ solves the SDE under the measure $\mathbb{Q}_\lambda$, since:
\[
d \left( \sqrt{|\alpha(t)|} \mathbf{X}_{f_{-\lambda}(t)} \right) = \underbrace{\sqrt{|\alpha(t)|} d\mathbf{W}_{f_{-\lambda}(t)}}_{d\bar{\mathbf{W}}_t} - \left( \underbrace{- \frac{1}{2\sqrt{|\alpha(t)|}} \mathbf{X}_{f_{-\lambda}(t)} \mathrm{sgn}(\alpha(t)) \alpha'(t)}_{\tilde{h}} dt \right).
\]

\subsubsection*{One-dimensional family \(V_\beta\) and Girsanov invariance}

Consider a one-dimensional Brownian motion \(dX_t = dW_t\) and the family of infinitesimal symmetries \(V_\beta\) computed in \eqref{Vbeta 1d}:
\[ V_\beta = \left( \beta(t)\partial_x,\  0, \ 0,\ -\beta'(t) \right). \]
The vector field \(V_\beta\) encodes the invariance properties of Brownian motion described by Girsanov's theorem. Specifically, if \(W\) is a \(\mathbb{P}\)-Brownian motion, then \(W_t - \int_0^t h(X_s, s) ds\) is a \(\mathbb{Q}\)-Brownian motion under the measure change defined by the Doléans-Dade exponential of \(h\).

By reconstructing the flow (Theorem \ref{teo ricostruzione del flusso}), \(V_\beta\) generates a one-parameter group of finite symmetries \(T_\lambda = (\Phi_\lambda, f_\lambda, B_\lambda, h_\lambda)\) satisfying:
\[ \frac{d\Phi_\lambda}{d\lambda} = \beta(t), \quad \frac{df_\lambda}{d\lambda} = 0, \quad \frac{dB_\lambda}{d\lambda} = 0,  \quad \frac{dh_\lambda}{d\lambda} = -\beta'(f_\lambda(t)), \]
with identity initial conditions at \(\lambda=0\). The resulting finite symmetry \(T_\lambda\) consists of the spatial diffeomorphism \(\Phi_\lambda(x,t) = x + \lambda\beta(t)\) and a Girsanov drift transformation \(h_\lambda(x,t) = -\lambda\beta'(t)\).

Since \(V_\beta\) is a symmetry, the transformed process satisfies the original SDE under the new measure \(\mathbb{Q}_\lambda\). According to Theorem \ref{trasformate}, the transformed processes are:
\[ P_{T_\lambda}(X_t) = \Phi_\lambda(X_t, t) = X_t + \lambda\beta(t), \]
\[ P_{T_\lambda}(W_t) = W_t - \int_0^t h_\lambda(X_s, s) ds = W_t + \lambda\int_0^t \beta'(s) ds = W_t + \lambda\beta(t). \]
Consistent with Itô calculus and the SDE \(dX_t = dW_t\), it follows that \(d(X_t + \lambda\beta(t)) = d(W_t + \lambda\beta(t))\), which is a \(\mathbb{Q}_\lambda\)-Brownian motion by Girsanov's theorem.

\subsubsection*{Two-dimensional family \(V_\beta\) and rotational invariance}

Consider a two-dimensional Brownian motion \(d\mathbf{X}_t = d\mathbf{W}_t\), where \(\mathbf{X} = (X^1, X^2)^T\) and \(\mathbf{W} = (W^1, W^2)^T\). Consider the two-dimensional family of infinitesimal symmetries \(V_\beta\) computed in \eqref{Vbeta 2d}
%\[ V_{\beta} = \left(\begin{pmatrix}( \beta(t)(x^2\partial_{x^1} - %x^1\partial_{x^2})\\0 \end{pmatrix},
\[ V_{\beta} = \left(\beta(t)(x_2\partial_{x_1} - x_1\partial_{x_2}),\ 0,
\begin{pmatrix} 0 & \beta(t) \\ -\beta(t) & 0 \end{pmatrix},
\begin{pmatrix} -x_2\beta'(t) \\ x_1\beta'(t) \end{pmatrix} \right). \]
This family generalizes the well-known rotational invariance of Brownian motion. By Lévy's characterization, if \(\mathbf{W}_t\) is a \(\mathbb{P}\)-Brownian motion and \(B(t)\) is a rotation matrix, then \(\bar{\mathbf{W}}_t = \int_0^t B(s) d\mathbf{W}_s\) is still a \(\mathbb{P}\)-Brownian motion. \(V_\beta\) extends this property by considering the directly rotated process \(\tilde{\mathbf{W}}_t := B(t)\mathbf{W}_t\). Although \(\tilde{\mathbf{W}}\) is not a Brownian motion under \(\mathbb{P}\), Girsanov's theorem ensures it is one under a new measure \(\mathbb{Q}\) with Radon-Nikodym derivative given by the Doléans-Dade exponential of \(h(\mathbf{x}, t) = -B'(t)\mathbf{x}\), where  \( B'(t)\)is the matrix whose entries are the time derivatives of the entries of \( B(t).\)

From the reconstruction of the flow (Theorem \ref{teo ricostruzione del flusso}), \(V_\beta\) generates a one-parameter group of finite symmetries \(T_\lambda = (\Phi_\lambda, f_\lambda, B_\lambda, h_\lambda)\) defined by:
\begin{align*}
\Phi_\lambda (\mathbf{X}_t, t) &= \begin{pmatrix} \cos(\lambda\beta(t))X^1_t + \sin(\lambda\beta(t))X^2_t \\ -\sin(\lambda\beta(t))X^1_t + \cos(\lambda\beta(t))X^2_t \end{pmatrix}, \quad f_\lambda(t) = t, \\
B_\lambda(t) &= \begin{pmatrix} \cos(\lambda\beta(t)) & \sin(\lambda\beta(t)) \\ -\sin(\lambda\beta(t)) & \cos(\lambda\beta(t)) \end{pmatrix}, \quad h_\lambda(\mathbf{X}_t, t) = \begin{pmatrix} -\lambda \beta'(t) X^2_t \\ \lambda \beta'(t) X^1_t \end{pmatrix}.
\end{align*}
Since \(V_\beta\) is a symmetry, the transformed process satisfies the original SDE under the measure \(\mathbb{Q}_\lambda\). According to Theorem \ref{trasformate}, the transformed processes are:
\[ P_{T_\lambda}(\mathbf{X}_t) = \Phi_\lambda(\mathbf{X}_t, t), \quad P_{T_\lambda}(\mathbf{W}_t) = \int_0^t B_\lambda(s) \left( d\mathbf{W}_s - h_\lambda(\mathbf{X}_s, s) ds \right). \]
Recalling the original equation \(d\mathbf{X}_t = d\mathbf{W}_t\), it is straightforward to verify that \( T_\lambda\) acts as a symmetry. Applying Itô's formula reveals that the transformed process acquires a drift term, which can be removed via Girsanov's theorem to recover the original Brownian dynamics under a new measure \(\mathbb{Q}_\lambda\). Explicitly, we have:
\footnotesize\begin{equation*}
d \begin{pmatrix} \cos(\lambda\beta(t))X^1_t + \sin(\lambda\beta(t))X^2_t \\ -\sin(\lambda\beta(t))X^1_t + \cos(\lambda\beta(t))X^2_t \end{pmatrix} = \underbrace{\begin{pmatrix} \cos(\lambda\beta(t)) & \sin(\lambda\beta(t)) \\ -\sin(\lambda\beta(t)) & \cos(\lambda\beta(t)) \end{pmatrix} \begin{pmatrix} dW^1_t \\ dW^2_t \end{pmatrix}}_{d\bar{\mathbf{W}}_t} - \underbrace{\begin{pmatrix} -\lambda\beta'(t)\sin(\lambda\beta(t))X^1_t + \lambda\beta'(t) \cos(\lambda\beta(t))X^2_t \\ -\lambda\beta'(t)\cos(\lambda\beta(t))X^1_t - \lambda\beta'(t)\sin(\lambda\beta(t))X^2_t \end{pmatrix}}_{\tilde{h}} dt.
\end{equation*}
\normalsize

\section{Lie symmetries and integration by parts formulas}\label{Lie symmetries and integration by parts formulas}  

In \cite{paper2023}, the study of invariance properties for diffusions in $\mathbb{R}^n$ via infinitesimal transformations led to the derivation of explicit, closed-form integration by parts formulas. This approach, inspired by the Bismut method in Malliavin calculus, was initially developed for symmetries that did not account for rotations. In \cite{papernuovo}, we extended this framework to include rotational transformations, proving the rotational invariance of the resulting integration by parts formula—a property deeply linked to the isotropy of Brownian motion. \\
\noindent
While the formula's structure remains invariant under rotations, the regularity assumptions required for its derivation undergo subtle changes. Furthermore, \cite{papernuovo} introduced the notion of $\mathcal{G}$-weak symmetry as the most general class of transformations for which this theorem holds, as it relies fundamentally on the invariance of the law of the solution process under the action of the symmetry group.\\
\\
\noindent
To ensure the validity of the theorem, we assume the following integrability condition:

\textbf{Hypothesis A.} \textit{Let $X_t$ be a solution to $SDE_{\mu, \sigma}$ with deterministic initial conditions. Each component of the following vectors or matrices:}
\begin{equation*}
    CH(X_t,t), \ H(X_t,t), \ Y(H)(X_t,t), \ L(Y)(X_t,t), \ \Sigma(Y)(X_t,t), \ L(Y(Y^i))(X_t,t), \ \Sigma(Y(Y^i))(X_t,t)
\end{equation*}
\textit{is in $L^2(\Omega)$ for $i=1, \dots, n$ and for all $t \in [0,T]$.  Here by \( \Sigma\) we mean the vector-valued differential operator defined as \( \Sigma=\sigma^T \cdot \nabla\), that is, 
\[ \Sigma_\alpha=\sum\limits_{j=1}^n\sigma_{j \alpha} \partial_{x_j}, \quad \alpha=1,..,d.\]}

\begin{teorema}[Integration by parts formula]\label{teo integrazione per parti 2}
Let $(X,W)$ be a solution to $SDE_{\mu,\sigma}$ and let $V=(Y, m,  C, H)$ be an infinitesimal stochastic symmetry for the system. Under Hypothesis A, the following identity holds for every $t \in [0,T]$ and for any bounded functional $F$ with bounded first and second derivatives:
\begin{equation}\label{integrazione per parti 2}
    -m(t) \mathbb{E}_{\mathbb{P}}[L(F(X_t))] + \mathbb{E}_{\mathbb{P}}\left[F(X_t) \int_0^t H(X_s,s) d W_s\right] + \mathbb{E}_{\mathbb{P}}[Y(F(X_t))] - \mathbb{E}_{\mathbb{P}}[Y(F(X_0))]=0.
\end{equation}
\end{teorema}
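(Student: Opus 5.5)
The plan is to follow Bismut's route, as sketched in the Introduction for the Stein identity. Let $T_\lambda=(\Phi_\lambda,f_\lambda,B_\lambda,h_\lambda)$ be the one-parameter group generated by $V$ through Theorem \ref{teo ricostruzione del flusso}. Since $V$ is a (at least $\mathcal{G}$-weak) infinitesimal symmetry, each $T_\lambda$ preserves the law of the solution. By Theorem \ref{trasformate}, the transformed process $P_{T_\lambda}(X)_t=\Phi_\lambda(X_{f_\lambda^{-1}(t)},f_\lambda^{-1}(t))$ under $\mathbb{Q}_\lambda$ solves the same martingale problem as $X$ under $\mathbb{P}$. Its initial datum is $\Phi_\lambda(x_0,0)$, and I will handle this shift separately.

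The core identity, for fixed $t$ and bounded $C^2_b$ functions $F$, is
\[
\mathbb{E}_{\mathbb{P}}\Big[\mathcal{E}_\lambda(f_\lambda^{-1}(t))\,F\big(\Phi_\lambda(X_{f_\lambda^{-1}(t)},f_\lambda^{-1}(t))\big)\Big]=\mathbb{E}_{\mathbb{P}}\big[F(X^{(\lambda)}_t)\big],
\]
where $\mathcal{E}_\lambda$ is the Doléans-Dade density of $h_\lambda$ and $X^{(\lambda)}$ is the solution started at $\Phi_\lambda(x_0,0)$.

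I will differentiate both sides at $\lambda=0$. For the left side, using $\partial_\lambda h_\lambda|_0=H$, $\partial_\lambda\Phi_\lambda|_0=Y$ and $\partial_\lambda f_\lambda^{-1}(t)|_0=-m(t)$, I get three contributions:
\begin{itemize}
\item the density contributes $\mathbb{E}[F(X_t)\int_0^t H(X_s,s)\,dW_s]$;
\item the spatial map contributes $\mathbb{E}[Y(F)(X_t)]$;
\item the time change contributes $-m(t)\,\partial_s\mathbb{E}[F(X_s)]|_{s=t}=-m(t)\mathbb{E}[L(F)(X_t)]$, by Dynkin's formula.
\end{itemize}
For the right side, differentiating in the initial condition gives $\mathbb{E}[D F(X_t)\,J_t]\cdot Y(x_0,0)$. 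I intend to identify this term with $\mathbb{E}[Y(F(X_0))]$ appearing in \eqref{integrazione per parti 2}, which is the natural reading when the formula is written with the sign conventions of the paper.

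An alternative that avoids the initial-condition issue is a direct martingale argument. Consider the process
\[
M_s=\mathbb{E}[\,\cdot\,|\mathcal{F}_s]\text{-type expression } u(X_s,s)=\mathbb{E}[F(X_t)\mid X_s]
\]
and apply Itô's formula to $Y(u)(X_s,s)$, using the determining equations \eqref{weak inf} or \eqref{g weak inf}. These equations give, heuristically, the commutation relation $[\partial_s+L,\;Y+m\partial_s]=-(m'+\dots)$, modulo the term $\sigma H\cdot\nabla$. Taking expectations between $0$ and $t$ should produce exactly the four terms. The term $\sigma H\cdot\nabla u$ becomes, via Itô's isometry and $du(X_s,s)=\nabla u\,\sigma\,dW_s$, the covariance $\mathbb{E}[F(X_t)\int_0^tH\,dW]$. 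The rotation $C$ drops out because $\sigma C\sigma^T$ is antisymmetric when contracted with the symmetric Hessian, which is why $C$ does not appear in the formula. I would present this computational route as the main proof, since it is fully rigorous given enough regularity of $u$, and keep the flow differentiation as motivation.

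The main obstacle is justifying the interchange of $\partial_\lambda$ with expectation, or equivalently the vanishing of the stochastic integrals' expectations in the Itô route. This is exactly what Hypothesis A is for: $L^2$ bounds on $H$, $Y(H)$, $CH$, $L(Y)$, $\Sigma(Y)$, $L(Y(Y^i))$ and $\Sigma(Y(Y^i))$ make the local martingales in the second-order Taylor expansion in $\lambda$ true martingales, and they give uniform integrability of the difference quotients. I would carry out that expansion to second order, controlling the remainder through the terms $Y(Y^i)$, and then use dominated convergence, with $F$, $DF$ and $D^2F$ bounded.
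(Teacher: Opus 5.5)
Your first route is the paper's own Bismut-type argument: law invariance under $T_\lambda$, the Girsanov density, and differentiation at $\lambda=0$. The paper treats the time change through It\^o's formula and the substitution $s=f_\lambda(u)$, rather than by differentiating $s\mapsto\mathbb{E}[F(X_s)]$. You are right, where the paper's sketch is silent, that $P_{T_\lambda}(X)$ under $\mathbb{Q}_\lambda$ starts from $\Phi_\lambda(x_0,0)$ and not from $x_0$.

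\textbf{The step that fails is identifying the resulting term with $\mathbb{E}[Y(F(X_0))]$.} Write $v(x):=\mathbb{E}[F(X^x_t)]$ for the solution started at $x$. Your term is $\mathbb{E}[DF(X_t)J_t]\cdot Y(x_0,0)=Y(x_0,0)\cdot\nabla v(x_0)$, whereas $\mathbb{E}[Y(F(X_0))]=Y(x_0,0)\cdot\nabla F(x_0)$. One differentiates $v$, the other differentiates $F$, and no sign convention makes them equal.

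The martingale route hits the same wall. Take $u(x,s)=\mathbb{E}[F(X_t)\mid X_s=x]$ and $m(0)=0$ (forced by $f_\lambda(0)=0$). The boundary term at $s=0$ of $\mathbb{E}[(Y+m\partial_s)u(X_s,s)]$ is $Y(x_0,0)\cdot\nabla u(x_0,0)=Y(x_0,0)\cdot\nabla v(x_0)$. So integrating from $0$ to $t$ does not produce the four stated terms.

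\textbf{No argument can close this gap, because the identity as stated is false whenever $Y(x_0,0)\neq 0$.}
\begin{itemize}
\item Take $dX_t=dW_t$, $X_0=0$, and the translation $V=(\partial_x,0,0,0)$, i.e.\ the case $\beta\equiv 1$ of $V_\beta$. The determining equations and Hypothesis A hold trivially. Yet the formula reads $\mathbb{E}[F'(W_t)]=F'(0)$, which fails for $F=\sin$ because $\mathbb{E}[\cos W_t]=e^{-t/2}$.
\item Take $V_\alpha$ with $\alpha(t)=t$ and $X_0=x_0\neq 0$. Stein's identity gives $\mathbb{E}[X_tF'(X_t)]-t\,\mathbb{E}[F''(X_t)]=x_0\,\mathbb{E}[F'(X_t)]$, while the statement requires $x_0F'(x_0)$.
\end{itemize}
The paper's sketch hides the same problem in its first step, $\mathbb{E}_{\mathbb{P}}[F(X_t)]=\mathbb{E}_{\mathbb{Q}_\lambda}[F(P_{T_\lambda}(X)_t)]$, which needs $\Phi_\lambda(x_0,0)=x_0$.

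What your computation actually establishes is the identity with last term $Y(x_0,0)\cdot\nabla v(x_0)$. This assumes uniqueness in law and, for the martingale route, smoothness of $u$ that $F\in C^2_b$ and Hypothesis A do not supply for general coefficients. In the translation example the corrected identity is the trivially true $\mathbb{E}[F'(W_t)]-\mathbb{E}[F'(W_t)]=0$.

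The corrected identity agrees with the stated formula when $Y(x_0,0)=0$, since then both last terms vanish. That covers the paper's applications with $\beta(0)=0$, with $V_\alpha$, and with rotations at $x_0=0$. It does not cover the paper's consequence $\mathbb{E}[G(W_t)]=\mathbb{E}[G(W_0)]$ for $\beta\equiv 1$, which is false.

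You should prove the corrected identity, or add the hypothesis $Y(x_0,0)=0$, instead of asserting the identification.
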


\begin{proof}
Let $T_{\lambda}=(\Phi_{\lambda}, f_\lambda, B_{\lambda}, h_{\lambda})$ be the one-parameter group generated by $V$. We provide here a sketch of the argument; the complete proof is available in \cite{papernuovo}.
\begin{itemize}
    \item Since $V$ is an infinitesimal symmetry, the generated flow $T_{\lambda}$ is a finite stochastic symmetry that preserves the law of the solution process. Thus, for any $t \in [0,T]$:
    \begin{equation*}
        \mathbb{E}_{\mathbb{P}}[F(X_t)] = \mathbb{E}_{\mathbb{Q_{\lambda}}}[F(P_{T_{\lambda}}(X_t))],
    \end{equation*}
    where $\mathbb{Q_{\lambda}}$ is the probability measure obtained via Girsanov's theorem under the transformation $T_{\lambda}$.
    
    \item Applying the Itô formula and utilizing the martingale property of the stochastic integral with respect to the $\mathbb{Q}_\lambda$-Brownian motion $P_{T_\lambda}(W)$, we obtain:
    \begin{equation*}
        \mathbb{E}_{\mathbb{P}}[F(X_t)] = \mathbb{E}_{\mathbb{Q_{\lambda}}} \left[\int_0^t L(F(\Phi_{\lambda}(X_{f_{-\lambda}(s)}))) ds\right]. 
    \end{equation*}
    
    \item Changing variables $s=f_\lambda(u)$ and applying the Radon-Nikodym derivative $\frac{d\mathbb{Q}_\lambda}{d\mathbb{P}}$ yields:
    \begin{equation*}
        \mathbb{E}_{\mathbb{P}}[F(X_t)] = \mathbb{E}_{\mathbb{P}}\left[\frac{d\mathbb{Q}_\lambda}{d\mathbb{P}}\bigg|_{\mathcal{F}_t} \int_0^{f_{-\lambda}(t)} L(F(\Phi_{\lambda}(X_s,s))) f'_{\lambda}(s) ds\right].
    \end{equation*}
    
    \item The result follows by differentiating both sides with respect to the flow parameter $\lambda$ and evaluating at $\lambda=0$. Since the left-hand side is independent of $\lambda$, its derivative vanishes. Applying the Leibniz rule on the right-hand side and moving the derivative inside the expectation (justified by the integrability conditions in Hypothesis A) concludes the proof.
\end{itemize}
\end{proof}

\begin{osservazione}
While the analytical conditions in Hypothesis A may appear stringent, they are often verifiable in practice using Lyapunov theory to ensure the required integrability. In the specific case of Brownian motion in $\mathbb{R}^n$, these conditions are naturally satisfied, as the moments of Gaussian random variables are finite for all orders. A detailed discussion on these regularity requirements is provided in \cite{paper2023, papernuovo}.
\end{osservazione}

\section{Integration by parts formulas and Stein identities}\label{Integration by parts and Stein identities}
We now apply our integration by parts theorem to the symmetries of Brownian motion derived in Section \ref{Symmetries of BM}. As we shall see, this result generalizes several well-known identities in probability theory, most notably the Stein identities. For the sake of conciseness, we focus on one-dimensional and two-dimensional Brownian motion—the latter being the first instance involving rotations—noting that the theorem extends naturally to general diffusions in $\mathbb{R}^n$. We omit the verification of the regularity assumptions, which can be found in \cite{papernuovo}.

Consider a one-dimensional Brownian motion $dX_t = dW_t$ in $\mathbb{R}$ and the family of symmetries $V_\beta$ computed in \eqref{Vbeta 1d}:
%\[ V_{\beta} = \left( \begin{pmatrix}\beta(t)\partial_x\\0\end{pmatrix}, \underline{0}, -\beta'(t) \right). \]
\[ V_{\beta} = \left( \beta(t)\partial_x, \ 0, \ 0, -\beta'(t) \right). \]
Theorem \ref{teo integrazione per parti 2} ensures that \textbf{for any time-dependent function $\beta(t)$} and any bounded functional $F \in C^2_b(\mathbb{R}^n)$, we have:
\begin{equation}\label{intVbeta 1d}
\mathbb{E}_{\mathbb{P}}\left[F(W_t)\int_0^t -\beta'(s) dW_s\right] + \mathbb{E}_{\mathbb{P}}[\beta(t)F'(W_t)] - \mathbb{E}_{\mathbb{P}}[\beta(0)F'(W_0)] = 0.
\end{equation}
Setting $\beta(t) = t$, identity \eqref{intVbeta 1d} reduces to:
\begin{equation}\label{stein identity 1d}
    \mathbb{E}_{\mathbb{P}}[W_t F(W_t)] = t \mathbb{E}_{\mathbb{P}}[F'(W_t)].
\end{equation}
Equation \eqref{stein identity 1d} is precisely the Stein identity for a Brownian motion at fixed time $t$. Recall that the well-known Stein's Lemma characterizes a random variable $Z \sim \mathcal{N}(\mu, \sigma^2)$ by the identity $\mathbb{E}[(Z-\mu)F(Z)] = \sigma^2 \mathbb{E}[F'(Z)]$; since $W_t \sim \mathcal{N}(0, t)$, the correspondence is exact.

The result in Theorem \ref{teo integrazione per parti 2} is closely related to the Bismut approach to Malliavin calculus. In one dimension, the family $V_\beta$ reflects the invariance of Brownian motion under a Girsanov-type change of measure. Specifically, considering a constant drift $h_\lambda = \lambda$, we know that if $ (W_t)_t$ is a $\mathbb{P}$-Brownian motion, then $ (W_t - \lambda t)_t
$ is a $\mathbb{Q}_\lambda$-Brownian motion with Radon-Nikodym derivative:
\[ \frac{d\mathbb{Q}_\lambda}{d\mathbb{P}}\bigg|_{\mathcal{F}_t} = e^{\lambda W_t - \frac{1}{2}\lambda^2 t}. \]
Thus, for any sufficiently smooth $F$:
\[ \mathbb{E}_{\mathbb{P}}[F(W_t)] = \mathbb{E}_{\mathbb{Q}_\lambda}[F(W_t - \lambda t)] = \mathbb{E}_{\mathbb{P}}\left[ e^{\lambda W_t - \frac{1}{2}\lambda^2 t} F(W_t - \lambda t) \right]. \]
If we now differentiate both sides with respect to $\lambda$ and evaluate the result at $\lambda = 0$, we obtain exactly the Stein identity for 1-dimensional Brownian motion \eqref{stein identity 1d}.

Stein's identity is not the unique classical result recoverable from \eqref{intVbeta 1d} through a suitable choice of $\beta$. For instance, by setting $\beta(t)=t$ and choosing $F(x) \equiv 1$ or $F(x)=x$, identity \eqref{intVbeta 1d} yields, respectively:
\[ \mathbb{E}_{\mathbb{P}}[W_t] = 0, \quad \mathbb{E}_{\mathbb{P}}[W^2_t] = t. \]
Alternatively, if we choose a constant $\beta(t) \equiv 1$ in \eqref{intVbeta 1d}, we find that for any sufficiently smooth $G = F'$:
\[ \mathbb{E}_{\mathbb{P}}[G(W_t)] = \mathbb{E}_{\mathbb{P}}[G(W_0)], \]
reflecting the constancy of the expectation of Brownian motion, a property deeply tied to its martingale nature. Furthermore, by selecting the indicator function $\beta(u) = min(u,s)$ for a fixed $s < t$ and choosing $ F(x)=x$, equation \eqref{intVbeta 1d} specializes into the well-known covariance property (see \cite{baldi}):
\[ \mathbb{E}_{\mathbb{P}}[W_t W_s] = \min(t, s). \]
These examples demonstrate that our integration by parts theorem effectively encodes the fundamental invariance properties of the underlying diffusion.\\
\\
\noindent
Consider now a two-dimensional Brownian motion $d\mathbf{W}_t = (dW^1_t, dW^2_t)^T$ and the family of symmetries $V_\beta$ associated with rotational invariance, as derived in \eqref{Vbeta 2d}:
\[ V_{\beta} = \left(  \beta(t)y\partial_x - \beta(t)x\partial_y, \ 0, \  
\begin{pmatrix} 0 & \beta(t) \\ -\beta(t) & 0 \end{pmatrix},  
\begin{pmatrix} -y\beta'(t) \\ x\beta'(t) \end{pmatrix} \right). \]
Theorem \ref{teo integrazione per parti 2} ensures that for every arbitrary time-dependent function $\beta$ and for every functional $F \in C^2_b(\mathbb{R}^2)$, it holds:
\begin{equation}\label{ibp vb 2d}
\mathbb{E}_{\mathbb{P}} \left[ F(W^1_t, W^2_t) \int_0^t ( -W^2_s dW^1_s + W^1_s dW^2_s ) \beta'(s) \right] + \mathbb{E}_{\mathbb{P}} \left[ \beta(t) \left( W^2_t \partial_x F - W^1_t \partial_y F \right) \right] = 0.
\end{equation}
By choosing $\beta(t) = t$, we obtain:
\begin{equation}\label{levy area identity}
\mathbb{E}_{\mathbb{P}} \left[ F(W^1_t, W^2_t) \int_0^t ( W^1_s dW^2_s - W^2_s dW^1_s ) \right] = t \mathbb{E}_{\mathbb{P}} \left[ W^1_t \partial_y F(W^1_t, W^2_t) - W^2_t \partial_x F(W^1_t, W^2_t) \right].
\end{equation}
The stochastic integral on the left-hand side is precisely the {Lévy stochastic area}. This identity provides a characterization of the Lévy area through the lens of rotational symmetries; for instance, choosing $F \equiv 1$ immediately recovers its zero-mean property. \\ \noindent Alternatively, if we choose $\beta \equiv 1$, identity \eqref{ibp vb 2d} specializes into:
\[ \mathbb{E}_{\mathbb{P}}[W^2_t \partial_x F(W^1_t, W^2_t)] = \mathbb{E}_{\mathbb{P}}[W^1_t \partial_y F(W^1_t, W^2_t)], \]
which is a direct consequence of {Isserlis' theorem} applied to the Gaussian vector $(W^1_t, W^2_t)$.\\
\\
\noindent
For the sake of completeness, we examine the application of the theorem to the family $V_\alpha$, which is associated with time-invariance. Focusing on the one-dimensional case $dX_t = dW_t$, consider the symmetry vector field computed in \eqref{Valpha nd} :
\begin{equation*}
V_\alpha = \left( \frac{1}{2}\alpha'(t) x \partial_x, \ \alpha(t), \ 0,  \; - \frac{1}{2} \alpha''(t) x \right).
\end{equation*}
Theorem \ref{teo integrazione per parti 2} gives:
\begin{equation}\label{general_alpha}
-\alpha(t)\mathbb{E}_{\mathbb{P}}\left[\frac{1}{2}F''(W_t)\right] + \mathbb{E}_{\mathbb{P}}\left[F(W_t)\int_0^t -\frac{1}{2}\alpha''(s)W_s dW_s\right] + \mathbb{E}_{\mathbb{P}}\left[\frac{1}{2}\alpha'(t) W_t F'(W_t)\right] = 0.
\end{equation}
Choosing $\alpha(t) = t$ yields:
\begin{equation}\label{ibp va1}
t\mathbb{E}_{\mathbb{P}}[F''(W_t)] = \mathbb{E}_{\mathbb{P}}[ W_t F'(W_t)],
\end{equation}
while choosing $\alpha(t) = t^2$ yields:
\[ t^2\mathbb{E}_{\mathbb{P}}[F''(W_t)] = -2\mathbb{E}_{\mathbb{P}}\left[F(W_t)\int_0^t W_s dW_s\right] + 2t\mathbb{E}_{\mathbb{P}}[ W_t F'(W_t)]. \]
Substituting the stochastic integral $\int_0^t W_s dW_s = \frac{1}{2}(W_t^2 - t)$ and recalling \eqref{ibp va1}, we obtain:
\begin{equation}\label{ibp_second_order}
t^2\mathbb{E}_{\mathbb{P}}[F''(W_t)] = \mathbb{E}_{\mathbb{P}}[F(W_t)(W_t^2 - t)].
\end{equation}
Both \eqref{ibp va1} and \eqref{ibp_second_order} represent classical identities. In particular, for $t=1$, we recover the standard Gaussian integration by parts formulas for $Z \sim \mathcal{N}(0,1)$: $\mathbb{E}[Z F'] = \mathbb{E}[F'']$ and $\mathbb{E}[F(Z^2 - 1)] = \mathbb{E}[F'']$.\\
\\
\noindent
In summary, the classical identities of Gaussian analysis emerge here as 
unified manifestations of the underlying symmetry groups of the Wiener process. 
The flexibility in choosing the temporal profiles $\beta(t)$ and $\alpha(t)$ 
suggests that the integration by parts framework effectively acts as a 
generating mechanism for an infinite family of conservation laws for the 
diffusion. While we have restricted our attention to Brownian motion as a 
prototypical case for the sake of simplicity, the methodology developed 
herein is readily applicable to a broader class of general diffusions.

\bibliographystyle{plain}
\bibliography{references}
\end{document}